\newtheorem{theorem}{Theorem}[section]
\newtheorem{proposition}[theorem]{Proposition}
\newtheorem{definition}[theorem]{Definition}
\theoremstyle{remark}
\newtheorem{remark}[theorem]{Remark}
\def\QSet{\mbox{\rm\kern.24em
\vrule width.03em height1.48ex depth-.051ex \kern-.26em Q}}
\def\E{{\mbox{\rm I\kern-.22em E}}}
\def\P{{\bf P}}
\def\T{{\bf T}}
\def\S{{\bf S}}
\def\Z{{\bf Z}}
\def\R{{\bf R}}
\def\N{{\bf N}}
\def\C{{\bf C}}
\def\supp{{\operatorname{supp}}}
\def\size{{\operatorname{size}}}
\def\P{{\mathcal P}}
\def\M{{\operatorname{M}}}
\def\F{{\mathcal F}}
\def\L{{\mathcal L}}
\def\G{{\mathcal G}}
\def\bas{\begin{align*}}
\def\eas{\end{align*}}
\def\bi{\begin{itemize}}
\def\ei{\end{itemize}}
\newenvironment{proof}{\noindent {\bf Proof} }{\endprf\par}
\def \endprf{\hfill  {\vrule height6pt width6pt depth0pt}\medskip}
\def\emph#1{{\it #1}}
\begin{document}
\title{Improved range in the Return Times Theorem}

\author{Ciprian Demeter}
\address{Department of Mathematics, Indiana University, 831 East 3rd St., Bloomington IN 47405}
\email{demeterc@@indiana.edu}

\thanks{The author was supported by a Sloan Research Fellowship and by NSF Grant DMS-0556389}
\keywords{Return times theorems, maximal multipliers, Maximal inequalities}
\thanks{ AMS subject classification: Primary 42B25; Secondary 37A45}
\begin{abstract}
We prove that the  Return Times Theorem holds true for pairs of $L^p-L^q$ functions, whenever $\frac{1}{p}+\frac{1}{q}<\frac{3}{2}$
\end{abstract}

\maketitle
\section{Introduction}
Let ${\bf X}=(X,\Sigma,\mu, \tau)$ be a dynamical system, that is a Lebesgue space $(X,\Sigma,\mu)$ equipped with an invertible bimeasurable  measure preserving transformation $\tau:X\to X$. We recall that a complete probability space $(X,\Sigma,\mu)$ is called a \emph{Lebesgue space} if it is isomorphic with the ordinary Lebesgue measure space $([0,1),\L,m)$, where $\L$ and $m$ denote the usual Lebesgue algebra and measure (see \cite{Ha} for more on this topic).  The system ${\bf X}$ is called \emph{ergodic} if $A\in\Sigma$ and $\mu(A\vartriangle\tau^{-1}A)=0$ imply $\mu(A)\in\{0,1\}.$

In \cite{Bo5} Bourgain proved the following result.

\begin{theorem}[Return times theorem]
\label{Bretthm}
For each function $f\in L^{\infty}(X)$ there is a universal set $X_0\subseteq X$ with $\mu(X_0)=1$, such that for each second dynamical system  ${\bf Y}=(Y,\F,\nu,\sigma)$, each $g\in L^{\infty}(Y)$ and each $x\in X_0$, the averages
$$\lim_{N \to \infty} \frac1{N}\sum_{n=1}^{N}f(\tau^nx)g(\sigma^ny)$$
converge $\nu$-almost everywhere.
\end{theorem}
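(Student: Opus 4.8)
\bigskip
\noindent\textbf{Proof proposal.} The plan is to reduce the theorem to two ingredients --- a transferred maximal inequality and convergence on a large enough class of $g$ --- and to isolate the hard point, the maximal inequality. We may assume $\|f\|_\infty\le1$ and, passing to the ergodic decomposition of ${\bf X}$, that ${\bf X}$ is ergodic. For fixed $x$ set
$$\mu_N^x:=\frac1N\sum_{n=1}^N f(\tau^nx)\,\delta_n,\qquad A_N^x g(y)=\frac1N\sum_{n=1}^N f(\tau^nx)\,g(\sigma^ny),\qquad \M^x g=\sup_{N\ge1}|A_N^x g|.$$
By Calder\'on transference, a bound $\|\M^x g\|_{L^2(Y)}\le C(x)\|g\|_{L^2(Y)}$ valid for \emph{every} system ${\bf Y}$ follows from the $\ell^2(\Z)$-bound for the maximal convolution $h\mapsto\sup_N|h*\mu_N^x|$, which depends on $x$ only through the sequence $(f(\tau^nx))_n$. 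Granting that this discrete bound holds with $C(x)<\infty$ for a.e.\ $x$, the Banach principle lets us fix $X_0$ --- depending only on $f$ --- so that for $x\in X_0$ it suffices to prove $\nu$-a.e.\ convergence of $A_N^x g$ for $g$ in a subset of $L^2(Y)$ with dense span, and this for all ${\bf Y}$ at once. The essential difficulty, and what makes the statement far stronger than Wiener--Wintner for a fixed ${\bf Y}$, is that $X_0$ must be chosen before ${\bf Y}$ is revealed.

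Next I would use the Kronecker factor of ${\bf Y}$. Write $L^2(Y)=K_Y\oplus K_Y^{\perp}$, with $K_Y$ the closed span of the eigenfunctions of $\sigma$. If $g\circ\sigma=\lambda g$, $|\lambda|=1$, then $A_N^x g=g\cdot\frac1N\sum_{n=1}^N f(\tau^nx)\lambda^n$, which converges for every $x$ in a full-measure set \emph{uniformly over $\lambda$ on the circle}, by the Wiener--Wintner theorem for ${\bf X}$; combined with the maximal inequality this gives convergence of $A_N^x g$ for all $g\in K_Y$. For $g\in K_Y^{\perp}$ the spectral measure $\rho_g$ of $g$ relative to $\sigma$ is continuous and a direct computation gives
$$\|A_N^x g\|_{L^2(Y)}^2=\int_{[0,1)}\Bigl|\frac1N\sum_{n=1}^N f(\tau^nx)\,e(n\theta)\Bigr|^2\,d\rho_g(\theta).$$
Since ${\bf X}$ is ergodic its point spectrum $\Lambda$ is countable and $\rho_g(\Lambda)=0$; after removing from $f$ its ordinary Kronecker component the Weyl-type averages $\frac1N\sum_{n\le N}f(\tau^nx)e(n\theta)$ tend to $0$ for a.e.\ $x$ \emph{uniformly in} $\theta$ --- a quantitative form of Wiener--Wintner --- while the eigenfunction part of $f$ contributes only frequencies in $\Lambda$, which are $\rho_g$-null. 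Dominated convergence then gives $\|A_N^x g\|_{L^2(Y)}\to0$ for a.e.\ $x$ (exceptional set again depending only on $f$), and the maximal inequality upgrades this to $A_N^x g\to0$ $\nu$-a.e. Adding the two pieces yields a.e.\ convergence on all of $L^2(Y)\supseteq L^\infty(Y)$.

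The main obstacle is the remaining claim: for a.e.\ $x$ the maximal convolution by $\{\mu_N^x\}_N$ is bounded on $\ell^2(\Z)$ (more generally $\ell^q$) with a bound independent of $x$; equivalently, one needs a uniform bound on the maximal multiplier norm of the symbols $\widehat{\mu_N^x}(\theta)=\frac1N\sum_{n=1}^N f(\tau^nx)e(n\theta)$. I would attack this by the Hardy--Littlewood circle method: split $\theta$ into major arcs around rationals of small denominator, where the operator reduces to maximal averages over arithmetic progressions modulated by bounded Fourier factors and is controlled by classical ergodic maximal inequalities, and the complementary minor arcs, where one uses the a.e.\ smallness of the Weyl sums above. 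The delicate step --- and the reason the result is hard --- is to recombine these contributions while keeping $\sup_N$ inside the norm; this needs a square-function decomposition together with a metric-entropy estimate for $\{\mu_N^x\}_N$, and the efficiency of this step is what decides for which exponents $(p,q)$ the argument survives (here the trivial case $p=q=\infty$). A more flexible route, the one pursued in this paper, recasts the maximal multiplier bound as an $L^q$ estimate for an outer, tent-type maximal operator and establishes it by a $\BMO$-style argument.
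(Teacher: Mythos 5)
First, note that this paper does not contain a proof of Theorem \ref{Bretthm}: it is Bourgain's theorem, quoted from \cite{Bo5} (with alternative proofs in \cite{BFKO} and \cite{Ru}), and the paper's own work concerns the $L^p$--$L^q$ extension. So your proposal has to be measured against those known proofs, and against its own internal logic. Your general scheme (Calder\'on transference, Banach principle, Kronecker factor of ${\bf Y}$, Wiener--Wintner for ${\bf X}$) is a recognized skeleton, but it has two genuine gaps, one of which is fatal even if the other is granted.

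The first gap: the universal maximal inequality, i.e.\ that for a.e.\ $x$ the maximal convolution with $\{\mu_N^x\}$ is bounded on $\ell^2(\Z)$ with an a.e.\ finite constant, is exactly the deep part of the subject; it is in essence Bourgain's return times maximal inequality (the $q=2$ case of Theorem \ref{otnumber1}, proved in \cite{DLTT}), and your closing paragraph about major/minor arcs, square functions and metric entropy is a plan, not an argument. The second, more decisive gap: even granting that maximal inequality, your treatment of $g\in K_Y^{\perp}$ does not close. You establish only $\|A_N^xg\|_{L^2(Y)}\to 0$ and then assert that ``the maximal inequality upgrades this to $A_N^xg\to 0$ $\nu$-a.e.'' That implication is not valid: a maximal inequality plus norm convergence does not give pointwise convergence; the Banach principle requires a.e.\ convergence on a dense subclass of $K_Y^{\perp}$ (or a summable rate along $N_k=\lfloor(1+\eps)^k\rfloor$ to run Borel--Cantelli and then fill in using boundedness, or an oscillation/variational inequality). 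No such subclass exists cheaply for \emph{weighted} averages (the coboundary trick does not telescope against the weights $f(\tau^nx)$), and the uniform Wiener--Wintner theorem you invoke carries no rate, so none of these repairs is available as stated. This is precisely where the existing proofs do their real work: \cite{BFKO} decompose $f$ (not $g$) over the Kronecker factor of ${\bf X}$ and, for the orthogonal part, prove a \emph{pointwise} statement valid for every ${\bf Y}$ and bounded $g$ via van der Corput's inequality plus Birkhoff's theorem applied to the correlation terms (no maximal inequality, no passage from norm to a.e.), while Bourgain proves quantitative oscillation/entropy estimates that do imply a.e.\ convergence. A minor additional point: for general $f\in L^\infty$ the Wiener--Wintner averages converge for all $\lambda$ off one null set but not uniformly in $\lambda$; uniformity requires $f$ orthogonal to the Kronecker factor, so your phrasing in the eigenfunction step overstates what is true, though only the ``for all $\lambda$'' version is actually needed there.
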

Subsequent proofs were given in \cite{BFKO} and \cite{Ru}.
If in the above theorem  $f$ (or $g$) is taken to be a constant function, one recovers the classical Birkhoff's pointwise ergodic theorem, see \cite{Bi}. However, Theorem ~\ref{Bretthm} is much stronger, in that it shows that given $f$, for almost every $x$, the  sequence $w_n=(f(\tau^nx))_{n\in\N}$ forms a system of universal weights for the  pointwise ergodic theorem.

The difficulty in Theorem \ref{Bretthm} lies in the fact that the weights provided by $f$ work for {\em every} dynamical system ${\bf Y}=(Y,\F,\nu,\sigma)$. If on the other hand, the system ${\bf Y}=(Y,\F,\nu,\sigma)$ is fixed, then the result follows from an approximation argument combined with applications of Birkhoff's theorem to the functions $f\otimes g_j$ in the product system ${\bf X}\times{\bf Y}$, where $(g_j)_j$ is a dense class of functions in $L^2(Y)$.

A result by  Assani, Buczolich and   Mauldin \cite{ABM} shows that  the Return Times Theorem  fails when $p=q=1$:

\begin{theorem}\cite{ABM}
Let ${\bf X}=(X,\Sigma,\mu, \tau)$
 be an ergodic dynamical system. There exist a function $f\in L^1(X)$ and a subset $X_0\subseteq X$ of full  measure with the following property: for each $x_0\in X_0$ and for each ergodic dynamical system ${\bf Y}=(Y,\F,\nu,\sigma)$, there exists $g\in L^{1}(Y)$ such that the averages
$$\lim_{N \to \infty} \frac1N\sum_{n=1}^{N}f(\tau^nx_0)g(\sigma^ny)$$
diverge for almost every y.
\end{theorem}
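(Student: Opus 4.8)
We sketch how one would prove this, following \cite{ABM}; throughout we may assume $f\ge 0$, the general case following by splitting into positive and negative, real and imaginary parts. Since the family of ergodic systems $\mathbf Y$ that must be defeated is enormous, the plan is to produce a \emph{single} $f\in L^1(X)$ which is universally bad: for $\mu$-a.e.\ $x_0$ the deterministic sequence $a_n:=f(\tau^n x_0)$ should fail to be a good weight for the pointwise ergodic theorem in $L^1$ of any ergodic system. Write $A_N^g(x_0,y):=\frac1N\sum_{n=1}^N f(\tau^n x_0)g(\sigma^n y)$.

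The first step is to locate where the obstruction can live. For a truncation level $T$ the function $f\wedge T$ lies in $L^\infty(X)$, so Theorem~\ref{Bretthm} makes $\frac1N\sum_{n\le N}(f\wedge T)(\tau^n x_0)g(\sigma^n y)$ converge $\nu$-a.e.\ for every $g\in L^\infty(Y)$; since $\|f-f\wedge T\|_1\to 0$, Birkhoff's theorem in $\mathbf Y$ shows the remainder contributes an oscillation of at most $\|g\|_\infty\|f-f\wedge T\|_1$, so the averages converge whenever $g$ is bounded. Hence $g$ must itself be genuinely unbounded, and what has to fail is an $L^1$-uniform bound for the return-times maximal operator $\sup_N|A_N^g(x_0,\cdot)|$. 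A gliding-hump construction would then assemble a single $g=\sum_k d_k\mathbf{1}_{B_k}\in L^1(Y)$ whose tails $g_K=\sum_{k>K}d_k\mathbf{1}_{B_k}$, although of vanishing $L^1$-norm, still satisfy $\limsup_N A_N^{g_K}(x_0,\cdot)\ge\eps_0$ on a set of $\nu$-measure $\ge\delta$; since each individual $A_N^{\mathbf{1}_{B_k}}(x_0,\cdot)$ converges by the truncation argument and $\liminf_N A_N^{g_K}(x_0,\cdot)$ is small in $L^1(\nu)$ by Fatou, this forces $A_N^g(x_0,\cdot)$ to diverge on a set of positive $\nu$-measure.

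Next I would upgrade this to full measure and dispatch all $\mathbf Y$ at once. In the product system $\mathbf X\times\mathbf Y$ the set $\mathcal D:=\{(x_0,y):A_N^g(x_0,y)\text{ does not converge}\}$ is invariant under $(x_0,y)\mapsto(\tau x_0,\sigma y)$, because $A_N^g(\tau x_0,\sigma y)=\frac{N+1}{N}A_{N+1}^g(x_0,y)-\frac1N f(\tau x_0)g(\sigma y)$ and the last term tends to $0$; thus $x_0\mapsto\nu(\mathcal D_{x_0})$ is $\tau$-invariant, hence $\mu$-a.e.\ equal to the positive constant $\mu\times\nu(\mathcal D)$, and a further argument using the ergodicity of $\mathbf Y$ (averaging $g$ against its $\sigma$-translates) would push this constant up to $1$. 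Finally, Rokhlin's lemma says every ergodic $\mathbf Y$ contains towers of arbitrary height $H$ exhausting $Y$ up to $\eps$, on which $(g(\sigma^n y))_n$ realises any $H$-periodic weight pattern; this transfers the whole matter to a purely combinatorial demand on the level sets $S_\lambda(x_0):=\{n:f(\tau^n x_0)\ge\lambda\}$, namely that along a rapidly growing sequence of scales $N_k$ they concentrate near a single residue class modulo some height $H_k$ while carrying the "wrong" amount of mass there.

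One then builds $f$ inside $\mathbf X$ as a lacunary sum $f=\sum_k c_k\mathbf{1}_{A_k}$, each $A_k$ occupying a prescribed block of levels of a Rokhlin tower of rapidly growing height $M_k$, the parameters tuned so that $\|f\|_1=\sum_k c_k\mu(A_k)<\infty$ while $c_k\to\infty$ — the latter being \emph{forced}, since $f\in L^\infty$ would return us to Theorem~\ref{Bretthm}. Because $A_k$ sits on the levels of an $M_k$-tower, Birkhoff's theorem fixes the density of $\{n:\tau^n x_0\in A_k\}$ as $\mu(A_k)$, and a quantitative Borel--Cantelli estimate over the random point $x_0$ shows that for $\mu$-a.e.\ $x_0$ this return set is, outside a small exceptional proportion, a union of arithmetic progressions of step $M_k$; choosing $H_k:=M_k$ then matches it to the towers available in $\mathbf Y$. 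I expect the hard part to be exactly this: $\|f\|_1<\infty$ forces $c_k\mu(A_k)\to0$, so the relevant second moments $\int\!\!\int(A_N^g)^2$ diverge and there is no room to truncate $f$, which is precisely why no soft argument can work. The entire difficulty is to establish, for $\mu$-almost every $x_0$, the delicate near-arithmetic structure of the large-value sets $S_\lambda(x_0)$ — a statement about the metric geometry of individual orbits rather than an averaged ergodic fact — and this is what dictates the careful choice of $c_k$, $\mu(A_k)$ and $M_k$.
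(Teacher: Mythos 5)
This statement is not proved in the paper at all: it is quoted verbatim from \cite{ABM} as background, so there is no internal argument to compare yours with; your proposal has to stand on its own as a proof of the Assani--Buczolich--Mauldin theorem, and it does not. What you have written is a strategy outline in which every step carrying the actual content is announced rather than established. The construction of $f=\sum_k c_k\mathbf 1_{A_k}$ is never specified: you do not choose $c_k$, $\mu(A_k)$, $M_k$, and you do not prove the key claim that for $\mu$-a.e.\ $x_0$ the large-value sets $S_\lambda(x_0)$ acquire the near-arithmetic structure you need to match against Rokhlin towers in an arbitrary ergodic ${\bf Y}$ --- you yourself flag this as ``the entire difficulty'', and it is exactly the heart of the matter (in \cite{ABM} it requires a delicate multi-scale construction in ${\bf X}$ plus Borel--Cantelli-type estimates along orbits). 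Likewise the gliding-hump assembly of $g$ presupposes a quantitative failure of the maximal inequality at the given $x_0$ that is stable under passing to tails $g_K$; that is precisely what the unproved orbit-structure lemma would have to deliver, so the argument is circular as it stands.

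Two specific steps are also unsupported or wrong as stated. First, the upgrade to divergence for $\nu$-a.e.\ $y$: invariance of $\mathcal D$ under $\tau\times\sigma$ together with ergodicity of ${\bf X}$ only shows that $x_0\mapsto\nu(\mathcal D_{x_0})$ is a.e.\ constant; ergodicity of ${\bf Y}$ does not ``push this constant up to $1$'', because the fibre $\mathcal D_{x_0}$ need not be $\sigma$-invariant. In the actual proof, full-measure divergence is engineered into the construction of $g$ (towers of rapidly increasing height exhausting $Y$, with a Borel--Cantelli argument over infinitely many scales), not deduced afterwards by soft invariance considerations. Second, in your truncation step the error term $\frac1N\sum_{n\le N}(f-f\wedge T)(\tau^n x_0)\,g(\sigma^n y)$ is controlled by Birkhoff's theorem applied in ${\bf X}$ (to $f-f\wedge T$ along the orbit of $x_0$), not in ${\bf Y}$; this is minor, but symptomatic of the level of care the quantifiers here demand, since the universal set $X_0$ must be chosen before ${\bf Y}$, $g$ and $T$ are known.
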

On the other hand, H\"older's inequality and an elementary density argument show that Bourgain's theorem holds for $f\in L^{p}(X)$ and $g\in L^{q}(Y)$, whenever $1\le p,q\le \infty$ and $\frac1p+\frac1q\le 1$, see \cite{Ru}, or section 4 in \cite{DLTT}.
It is an interesting question to understand the precise range of $p$ and $q$ for which a positive result holds.

A significant progress on this issue appears in \cite{DLTT}, where it is proved that the Return Times Theorem remains valid when $q\ge 2$ and $p>1$. We build on the approach from \cite{DLTT} and prove:
\begin{theorem}
\label{Bretthmext}
Let $1<p,q\le \infty$ be such that
\begin{equation}
\label{restrictionpandq}
\frac1p+\frac1q<\frac32
\end{equation}
For each dynamical system $(X,\Sigma,\mu, \tau)$ and each  $f\in L^{p}(X)$ there is a universal set $X_0\subseteq X$ with $\mu(X_0)=1$, such that for each second dynamical system  ${\bf Y}=(Y,\F,\nu,\sigma)$, each $g\in L^{q}(Y)$ and each $x\in X_0$, the averages
$$\lim_{N \to \infty} \frac1{N}\sum_{n=1}^{N}f(\tau^nx)g(\sigma^ny)$$
converge $\nu$-almost everywhere.
\end{theorem}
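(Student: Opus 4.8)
The plan is to follow the Calder\'on transfer-principle philosophy used in \cite{DLTT}: reduce the almost-everywhere convergence statement to a maximal inequality on $\Z$ for a single well-chosen model operator, and then interpolate. Concretely, after splitting $f$ into a part handled by the known range $\frac1p+\frac1q\le 1$ (H\"older) and a mean-zero part, the problem becomes controlling, for $\mu$-a.e.\ $x$, the maximal operator $g\mapsto \sup_N \bigl|\frac1N\sum_{n=1}^N f(\tau^n x) g(\sigma^n \cdot)\bigr|$ on $L^q(Y)$, uniformly enough that an oscillation/variation estimate along lacunary scales closes the convergence argument. By transference it suffices to prove the corresponding statement on $\Z$: for a sequence $a=(a_n)$ arising as $a_n=f(\tau^n x)$, the maximal multiplier operator with symbol $\widehat{a \mathbf 1_{[1,N]}}$ is bounded on $\ell^q(\Z)$ with a bound that, after averaging over $x$, behaves like a constant times $\|f\|_{L^p}$ — this is where the restriction $\frac1p+\frac1q<\frac32$ must enter.

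The key technical device I would use is the \emph{maximal multiplier norm} machinery and the two-parameter ($L^2\times L^2$ at one end, and a ``trivial'' $L^\infty$ or $\ell^1$-type bound at the other) interpolation that \cite{DLTT} set up. First I would recall that \cite{DLTT} prove the sharp $L^2\times L^2$ (in fact $q\ge 2$, $p>1$) case by a time-frequency decomposition: the averaging operators are compared to a sum of ``wave packet'' pieces organized by Bourgain-style entropy/maximal-inequality arguments, and the $L^p(X)$-norm of the resulting maximal multiplier is controlled via a square function and Bourgain's return-times maximal inequality for $f\in L^p$, $p>1$. The new input needed to push $q$ below $2$ is a \emph{good-$\lambda$ or restricted-weak-type} estimate at an endpoint where $q$ is close to $1$: here one can afford essentially no cancellation in $g$, so the operator must be dominated by the ergodic maximal function of $|g|$ times an $\ell^\infty$-in-$n$ control of the weights $f(\tau^n x)$ — which forces $p=\infty$ at that vertex. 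Interpolating the vertex $(\frac1p,\frac1q)=(0,1)$ against the \cite{DLTT} region $\{\frac1q\le\frac12,\ \frac1p<1\}$ yields exactly the open triangle $\frac1p+\frac1q<\frac32$.

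Carried out in order, the steps are: (1) standard reductions — full measure set, density of nice functions, subtract the part of $f$ covered by H\"older's range, so one may assume $f$ has a favorable form and it suffices to prove an oscillation inequality; (2) transfer to $\Z$ and phrase everything as a maximal-multiplier estimate $\big\|\sup_N |T_N^{(x)} g|\big\|_{\ell^q} \lesssim C(x)\|g\|_{\ell^q}$ with $\int_X C(x)^r\,d\mu \lesssim \|f\|_{L^p}^r$ for suitable $r$; (3) reprove/quote the $\ell^2$ (and $\ell^q$, $q\ge2$) bound from \cite{DLTT} via the time-frequency / wave-packet decomposition and Bourgain's $L^p$ return-times maximal inequality; (4) establish the new near-$\ell^1$ endpoint bound by crude domination by a maximal function, valid with $L^\infty$ weights; (5) interpolate (vector-valued, or via the maximal-multiplier-norm interpolation lemma of \cite{DLTT}) to obtain the full open triangle; (6) upgrade the maximal inequality to a.e.\ convergence by exhibiting a dense class on which convergence is clear (e.g.\ using that for a fixed system $\mathbf Y$ convergence holds by Birkhoff applied to $f\otimes g_j$) and using the maximal inequality to pass to the limit. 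The main obstacle I expect is step (3)–(5): making the time-frequency decomposition interact cleanly with the interpolation so that no logarithmic losses accumulate in $N$, and ensuring the near-$\ell^1$ endpoint is genuinely usable — i.e.\ that the domination by a maximal function is compatible with the multiplier formulation so that complex interpolation of analytic families of maximal multipliers actually applies. Handling the non-reflexive endpoints $p=\infty$ or $q=\infty$ (where one must argue with $\mathrm{BMO}$-type or $L\log L$ substitutes, or simply take limits from the open region) is the secondary difficulty.
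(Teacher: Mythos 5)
Your reductions (transfer to the real line, reduction of a.e.\ convergence to a maximal inequality, and the density argument via Birkhoff's theorem for a fixed second system) agree with the paper's, but the heart of your plan --- obtaining the range \eqref{restrictionpandq} by interpolating the \cite{DLTT} region $\{q\ge 2,\ p>1\}$ against a near-$L^1$-in-$g$ endpoint at $(\frac1p,\frac1q)=(0,1)$ --- does not work. First, that endpoint lies in the duality range $\frac1p+\frac1q\le 1$ (it is just H\"older), so it carries no new information. Second, the geometry is wrong: the convex hull, in the $(\frac1p,\frac1q)$-plane, of the point $(0,1)$ (or of the whole H\"older triangle) with $\{\frac1q\le\frac12,\ \frac1p<1\}$ is bounded above by the segment joining $(0,1)$ to $(1,\frac12)$, i.e.\ it is the region $\frac{1}{2p}+\frac1q<1$ (equivalently $\frac1p+\frac2q<2$), which is strictly smaller than $\frac1p+\frac1q<\frac32$; for instance $p=5/3$, $q=20/17$ satisfies \eqref{restrictionpandq} but not $\frac1{2p}+\frac1q<1$. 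So no interpolation of the known estimates can produce the claimed triangle --- consistent with the remark in the introduction that even the single-scale model sum is unbounded once $\frac1p+\frac1q\ge\frac32$, so the threshold has to be produced inside the time-frequency analysis, not by soft interpolation around it. There is also a technical obstruction in your step (5): the exponent $q$ enters through the maximal multiplier norm $M_q^*$ (a supremum over all $g$ with $\|g\|_q=1$, and over all systems after transfer), and such norms do not form an interpolation family in $q$ in any standard sense; the paper interpolates only in $p$ (restricted weak type), for each fixed $q$.

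What the paper actually does for $1<q<2$ is to rerun the time-frequency argument of \cite{DLTT} in a simplified form (each average is kept as a single Littlewood--Paley piece, so the Hardy--Littlewood kernel is no longer treated like the Hilbert kernel) and, crucially, to replace Bourgain's $L^2$ maximal multiplier inequality by its $L^q$, $1<q<2$, extension from \cite{Dem} (Theorem \ref{Blemma}), which costs a factor $N^{1/q-1/r+\epsilon}$ in the number $N$ of frequencies. The restriction \eqref{restrictionpandq} then emerges from the exponent bookkeeping in the proof of Theorem \ref{ceamaiceahh}: with $Q=\frac1q-\frac12+\epsilon$ one needs $Q<1-\frac1p$ in order to choose the parameters $b$, $\beta_n$, $\gamma_n$, $\sigma_n$ consistently, and this is exactly $\frac1p+\frac1q<\frac32$ up to $\epsilon$ --- the same numerology that limits the bilinear Hilbert transform. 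Your steps (1), (2) and (6) can stay, but steps (3)--(5) must be replaced by this quantitative $L^q$ input inside the tree/exceptional-set argument rather than by interpolation between the $q\ge 2$ result and any $q=1$-type endpoint.
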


%%%%%%%%%%%%%%%%%%%%%%%%%%%%%%%%%%
Given the result in \cite{DLTT} and the convergence for $L^{\infty}$ functions $f$ and $g$, an approximation argument  will immediately prove Theorem \ref{Bretthmext}, once we establish the following maximal inequality:

\begin{theorem}
\label{otnumber1}Let $1<p<\infty$ and  $1<q<2$ satisfy \eqref{restrictionpandq}.
For each dynamical system ${\bf X}=(X,\Sigma,\mu, \tau)$ and each $f\in L^p(X)$
\begin{equation}
\label{hgdhgdhgh769878}
\|\sup_{(Y,\F,\nu,\sigma)}\sup_{\|g\|_{L^q(Y)=1}}\|\sup_{N}|\frac1{N}\sum_{n=1}^{N}f(\tau^nx)g(\sigma^ny)|\|_{L^q_y(Y)}\|_{L^p_x(X)}\lesssim_{p,q} \|f\|_{L^p(X)},
\end{equation}
where the first supremum in the inequality above is taken over all dynamical systems ${\bf Y}=(Y,\F,\nu,\sigma)$.  Here and in the following, we have subscripted some of our $L^p$ norms to clarify the variable being integrated over.
\end{theorem}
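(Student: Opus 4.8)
The plan is to follow the time--frequency strategy of \cite{DLTT}, in the tradition of Lacey--Thiele: reduce \eqref{hgdhgdhgh769878}, after transference, to a maximal multiplier estimate on the real line, and then push the range below $q=2$ by means of a sharpened energy estimate in the resulting tree decomposition.

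The first step is the standard reduction. By the transference principle, applied in both variables as in \cite{DLTT} --- so that the outer supremum over all systems ${\bf Y}$ is absorbed into working with $g\in L^q(\R)$ on the line --- it suffices to prove the analogue of \eqref{hgdhgdhgh769878} in which ${\bf X}$ and ${\bf Y}$ are the real line with Lebesgue measure and the translation flow, and the discrete averages are replaced by $A_th(y)=\frac1t\int_0^t h(y+s)\,ds$, the supremum over $t>0$ being linearized by a measurable choice $t=t(x,y)$. For $x$ fixed, $g\mapsto\frac1t\int_0^t f(x+s)g(y+s)\,ds$ is the Fourier multiplier operator in the $y$ variable with symbol $m_{x,t}(\eta)=\frac1t\int_0^t f(x+s)e^{2\pi is\eta}\,ds$, and $\|m_{x,t}\|_{L^\infty_\eta}\le Mf(x)$, where $M$ is the Hardy--Littlewood maximal operator. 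Hence \eqref{hgdhgdhgh769878} becomes the assertion that the maximal multiplier norm on $L^q(\R_y)$ of the family $\{m_{x,t}\}_t$ is, as a function of $x$, bounded in $L^p(\R_x)$ by a constant times $\|f\|_{L^p}$.

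The second step is the time--frequency decomposition. Write $f=\sum_j f_j$, with $f_j$ the smooth frequency projection of $f$ to $|\xi|\sim2^j$, inducing $m_{x,t}=\sum_j m^{(j)}_{x,t}$, where $m^{(j)}_{x,t}$ is essentially a superposition of Fej\'er-type bumps of width $2^{-k}$ located at the frequencies $\sim2^j$ carried by $f_j$. After discretizing into time--frequency tiles, the maximal operator attached to $m^{(j)}_{x,t}$ is a Carleson-type maximally modulated multiplier operator at frequency scale $2^j$; one organizes it into trees and estimates it by the size--energy method, the size estimate being fed by the pointwise bound $\|m^{(j)}_{x,t}\|_\infty\le Mf_j(x)$. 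For the sum over $j$ one uses the decay of the Fej\'er kernel away from its critical scale together with a smooth Littlewood--Paley square function in the $y$ variable, valid for all $1<q<\infty$, in place of Rubio de Francia's square function for arbitrary frequency intervals --- the latter being precisely what restricted \cite{DLTT} to $q\ge2$.

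The crux, which I expect to be the main obstacle, is the per-scale estimate for $1<q<2$. At $q=2$ it is the heart of \cite{DLTT}: Plancherel together with the almost-orthogonality of the frequency supports at distinct time scales. This is exactly where oscillation is used --- discarding the cancellation in the return times operator yields only the H\"older range $\frac1p+\frac1q\le1$ recalled above, and the additional $\frac12$ is what must be extracted. For $q<2$ the Plancherel step is unavailable and no crude estimate suffices (discarding cancellation again gives only $\frac1p+\frac1q\le1$), so one needs a genuinely sharper input: I would prove a new maximal multiplier estimate by organizing the wave packets of $g$ at frequency $\sim2^j$ into trees, controlling tree sums by a John--Nirenberg/$\BMO$-type size estimate and an energy estimate that exploits the averaging structure of $m^{(j)}_{x,t}$ itself --- not merely $\|m^{(j)}_{x,t}\|_\infty\le Mf_j(x)$ --- and then interpolating the result against the trivial bound $\|\sup_t|T_{m^{(j)}_{x,t}}g|\|_{L^\infty_y}\le Mf_j(x)\,\|g\|_{L^\infty}$. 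The exponent condition \eqref{restrictionpandq} should emerge from the bookkeeping in the tree sum: the number of frequency blocks of $f$ that can genuinely interfere is controlled by $\|f\|_{L^p}$ and degrades as $p\downarrow1$, the concentration of $g$ allowed in $L^q_y$ degrades as $q\downarrow1$, and the two come into balance exactly at $\frac1p+\frac1q=\frac32$. Once the per-scale estimate holds with an admissible power decay in $j$, summing in $j$ and taking the outer $L^p(\R_x)$ norm --- where $\|Mf_j\|_{L^p}\lesssim\|f_j\|_{L^p}$ and Littlewood--Paley in $x$ are used --- finishes the proof.
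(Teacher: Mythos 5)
There is a genuine gap, and it sits exactly at the point you yourself flag as ``the crux.'' Your reduction (transference, linearization of the supremum, viewing the operator for fixed $x$ as a maximal Fourier multiplier in the $y$ variable, discretization into tiles and trees) matches the architecture of the actual proof, which indeed deduces the dynamical statement by standard transfer from a real-line maximal multiplier inequality and then runs a tile/tree analysis. But the decisive new ingredient for $q<2$ is never proved in your proposal: you write that you ``would prove a new maximal multiplier estimate'' via a BMO-type size estimate, an energy estimate, and interpolation, and that the condition $\frac1p+\frac1q<\frac32$ ``should emerge from the bookkeeping.'' That is precisely the content that has to be supplied, and it is not a routine size--energy bookkeeping step. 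In the paper the role of this missing estimate is played by a quantitative $L^q$ analogue of Bourgain's maximal multiplier theorem (Theorem~\ref{Blemma}, proved separately in \cite{Dem}): for $N$ frequencies one has $\|\sup_k|\Delta_k f|\|_q\lesssim N^{1/q-1/r+\epsilon}(C+\|m_\omega\|_{V^{r,*}})\|f\|_q$, and it is the exponent $1/q-1/r\approx 1/q-1/2$ in the $N$-dependence --- fed by the counting function bound $\sum_{\T}|I_T|\lesssim 2^{2n}|F|$ from the Bessel inequality and combined with variational ($V^r$) tree estimates as in Proposition~\ref{BMOtree} and the exceptional-set argument of Theorem~\ref{mainineq} --- that produces the threshold $\frac1p+\frac1q=\frac32$ after the parameter choices $\sigma_n,\beta_n,\gamma_n$ in Section~\ref{section:last}. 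Without this (or an equivalent) quantitative input, your argument establishes nothing beyond the H\"older range.

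Two secondary points. First, the attribution of the restriction $q\ge 2$ in \cite{DLTT} to Rubio de Francia's square function is off: the restriction comes from the $L^2$-based machinery there, chiefly Bourgain's $L^2$ maximal multiplier lemma, which is exactly what Theorem~\ref{Blemma} replaces. Second, your plan to sum over the Littlewood--Paley scales $j$ of $f$ using ``decay of the Fej\'er kernel away from its critical scale'' plus an $L^q_y$ square function, and then to conclude via $\|Mf_j\|_{L^p}\lesssim\|f_j\|_{L^p}$, does not close: on the diagonal interaction there is no decay in $j$ to make the pieces summable, and the maximal multiplier norm is only sublinear in $f$, so one cannot recover $\|f\|_{L^p}$ from the sum of the per-piece bounds. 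The paper avoids this by not performing a Littlewood--Paley decomposition of the averages at all: it expands $1_F$ in a Gabor frame at the scale of each average, reduces to a restricted weak-type estimate for $f=1_F$ via interpolation, and handles the summation through sizes, exceptional sets, and the tree counting function rather than through a square function in $j$.
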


As explained in \cite{DLTT}, this will follow by standard transfer, from the following real line version.

\begin{theorem}
\label{t.returntimeave}Let $1<p<\infty$ and  $1<q<2$ satisfy \eqref{restrictionpandq}.
For each $f\in L^p(\R)$ we have
\begin{equation}
\label{e.returntimeave}
\left\|\sup_{\| g\|_{L^q(\R)}=1}\|\sup_{k\in\Z}\frac{1}{2^{k+1}} \int_{-2^k}^{2^k}|f(x+y)g(z+y)|dy\|_{L^q_z(\R)}\right\|_{L^p_x(\R)}\lesssim_{p,q} \|f\|_{L^p(\R)}.
\end{equation}
\end{theorem}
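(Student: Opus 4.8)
The plan is to linearize the two inner suprema and exploit the $L^q$-duality for $q<2$. First I would fix measurable choices: a measurable function $k=k(x,z)$ selecting the best scale, and a measurable function $g$ on $\R$ with $\|g\|_q=1$, so that matters reduce to bounding
\[
\left\|\Big\|\frac{1}{2^{k(x,z)+1}}\int_{-2^{k(x,z)}}^{2^{k(x,z)}}|f(x+y)\,g(z+y)|\,dy\Big\|_{L^q_z}\right\|_{L^p_x}.
\]
Since $q<2$, the natural move is to pair the inner $L^q_z$-norm with a function $h=h(x,z)\in L^{q'}_z$ of norm one (again chosen measurably in $x$), reducing everything to controlling a trilinear form in $f$, $g$, $h$ of the shape $\int\!\!\int\!\!\int |f(x+y)|\,|g(z+y)|\,|h(x,z)|\,\frac{dy}{2^{k+1}}\,dz$. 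After the change of variables $u=x+y$, $v=z+y$ (so $u-v=x-z$), this becomes an average, over scales, of a model sum that is exactly the object analyzed in \cite{DLTT}: a maximally-modulated or maximally-truncated bilinear operator tested against $h$. The point of the hypothesis $q>1$ together with $\frac1p+\frac1q<\frac32$ — equivalently $\frac1{p}<\frac12+\frac1{q'}$ — is that it places the exponent triple in the region where the relevant maximal multiplier / phase-plane estimates from \cite{DLTT} are available, with room to spare coming from the strict inequality.

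The key steps, in order, are: (1) reduce, by the standard transference and approximation already cited, to the fixed linearized trilinear form above; (2) perform a Calderón–Zygmund / Whitney decomposition of $\R^2$ in the variables $(x,z)$ adapted to the diagonal $x=z$, so that on each Whitney cube the selected scale $2^{k(x,z)}$ is comparable to the distance to the diagonal, which discretizes the scale parameter; (3) on each such piece, recognize the inner average as (a piece of) the return-times bilinear operator and invoke the maximal inequality from \cite{DLTT} — valid in the stated range — to bound it in $L^{p}_x(L^{q}_z)$; (4) sum the Whitney pieces using the strict inequality \eqref{restrictionpandq} to produce a convergent geometric series, which is where the improvement over the endpoint is spent; (5) undo the linearizations by taking suprema back, using that all the choices were made measurably and the bounds were uniform.

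I would borrow from \cite{DLTT} the heavy machinery: their time-frequency decomposition of the bilinear return-times operator into tree/forest pieces, the associated $L^p$ maximal multiplier bounds, and in particular the fact that the operator maps $L^p(\R)\times L^q(\R)$ boundedly for $q\ge 2$, $p>1$. The new input needed to reach $q<2$ is to run the duality in $z$ \emph{before} decomposing — i.e.\ to treat the pair $(g,h)$ symmetrically as $L^q\times L^{q'}$ inputs — so that one is really proving an $L^p\times L^q\times L^{q'}$ trilinear bound; the constraint $q<2$ then corresponds to $q'>2$, and one applies the \cite{DLTT} estimate in the variable that now carries the large exponent. The main obstacle I anticipate is precisely making this symmetrization legitimate at the level of the \emph{maximal} (sup over $k$) operator: the linearizing function $k(x,z)$ couples the two variables, so one cannot naively split the trilinear form and apply a bilinear theorem on a product of fixed spaces. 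Handling this coupling — most likely by the Whitney/diagonal decomposition in step (2), which freezes the scale on each piece and thereby decouples the maximal truncation from the duality — is the crux of the argument, and controlling the sum over Whitney scales in step (4) without losing the gain from the strict inequality is the delicate quantitative point.
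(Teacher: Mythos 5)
Your reduction to a trilinear form by dualizing the $L^q_z$ norm is where the argument breaks down, and it breaks down for a structural reason. The quantity being estimated for each fixed $x$ is a maximal multiplier norm, $\sup_{\|g\|_q=1}\|\sup_k|T_{m_k^x}g|\|_{L^q_z}$, with the supremum over the scale $k$ \emph{inside} the $L^q_z$ norm. Such maximal operator norms do not dualize: an $L^q$ bound for $\sup_k|T_{m_k}g|$ is neither equivalent to nor implied by any bound on $L^{q'}$, so the plan of treating $(g,h)$ symmetrically as $L^q\times L^{q'}$ inputs and "applying the \cite{DLTT} estimate in the variable that now carries the large exponent" cannot be executed --- the $q\ge 2$ theorem of \cite{DLTT} is a statement about the $M_q^*$ norm of a family of multipliers, not about a trilinear form in which the roles of $g$ and the dualizing function could be exchanged. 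Moreover, the dualizing function $h$ lives on $\R^2$ rather than $\R$, so even formally the trilinear form is not of bilinear Hilbert transform type. Your proposed fix, the Whitney decomposition in step (2), rests on the claim that the linearized scale $2^{k(x,z)}$ is comparable to $|x-z|$ on each Whitney cube; this is false, since $k(x,z)$ is an arbitrary measurable selection for a Hardy--Littlewood-type maximal average and bears no relation to the distance to the diagonal. Finally, the condition $\frac1p+\frac1q<\frac32$ does not enter as a convergence factor for a sum over Whitney pieces: as the paper remarks, even the single-scale model operator is unbounded beyond the $3/2$ threshold, so the constraint is already active scale by scale and cannot be recovered by summing a geometric series.

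What the paper actually does is keep the maximal multiplier structure intact throughout. It first replaces the rough average by a smooth compactly Fourier-supported kernel, reduces via restricted weak type interpolation to $f=1_F$, and expands $1_F$ in a Gabor frame at each scale, so that the operator becomes the $M_{q,\theta}^*$ norm of the family $\bigl(\sum_{|I_s|=2^k}\langle 1_F,\varphi_s\rangle\phi_s(x,\theta)\bigr)_{k\in\Z}$; it then runs a time-frequency argument (sizes, trees, Bessel inequality, exceptional sets). The irreplaceable new ingredient for $q<2$ is the $L^q$ version of Bourgain's maximal multiplier inequality (Theorem \ref{Blemma}, from \cite{Dem}), whose factor $N^{1/q-1/r+\epsilon}$, fed into the interplay between tree counts and sizes, is precisely what produces the range $\frac1p+\frac1q<\frac32$. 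Without a substitute for that theorem, no rearrangement of the $q\ge 2$ result of \cite{DLTT} will reach $q<2$.
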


When $1\le p,q\le \infty$ are in the duality range, that is when $\frac{1}{p}+\frac{1}{q}\le 1$,  Theorem \ref{t.returntimeave} follows immediately from H\"older's inequality. The case $q=2$, $1<p<\infty$ was proved in \cite{DLTT}. The approach from \cite{DLTT} consists of treating averages and singular integrals in a similar way: one performs Littlewood-Paley decompositions of each average, combined with Gabor frames expansions of $f$, to obtain a model sum. This discretized operator turns out to be a maximal truncation of the Carleson operator \cite{Car}
$$Cf(x,\theta)=p.v.\int_{\R}f(x+y)e^{iy\theta}\frac{dy}{y}.$$
The analysis in \cite{DLTT} is then driven by time-frequency techniques combined with an $L^2$ maximal multiplier result of Bourgain. Most of the work in \cite{DLTT} is $L^2$ based, and in particular, the fact that $q=2$ in Theorem \ref{t.returntimeave} is heavily exploited.

In this paper, we relax the restriction $q=2$, and replace it with \eqref{restrictionpandq}. There are two key new ingredients. The first is a simplification of the argument from \cite{DLTT}, which consists of treating the Hardy-Littlewood kernel in a way distinct from the Hilbert kernel. In \cite{DLTT}, the two kernels were treated on equal footing, as a byproduct of a unified approach for regular averages and signed averages.
Here we treat each average as a single Littlewood-Paley piece. This decomposition  simplifies the model sum to a significant extent, and is suited for analysis on spaces other than $L^2$. The main new ingredient we use here is the $L^q$ version of Bourgain's result on maximal multipliers, that we proved in \cite{Dem} (see Theorem \ref{Blemma} below).

It is interesting to remark that the range \eqref{restrictionpandq} that we establish is the same as the range where the Bilinear Hilbert Transform (see \cite{LTBilH}, \cite{LT2})
$$BHT(f,g)(x)=p.v.\int_\R f(x+y)g(x-y)\frac{dy}{y}$$
and the Bilinear maximal function (see \cite{La})
$$BM(f,g)(x)=\sup_{t>0}|\frac{1}{2t}\int_{-t}^{t} f(x+y)g(x-y)dy|$$
are known to be bounded. This is perhaps not a coincidence, as the methods we use to prove Theorem \ref{t.returntimeave}
are related to those used in the proof of the Bilinear Hilbert Transform. Moreover, in both cases, the methods fail beyond the $3/2$ threshold, essentially because of the same reason. Even the model sum that contains a single scale is unbounded if $\frac1p+\frac1q\ge 3/2$. Another interesting connection is that
both the boundedness of the bilinear maximal function and the Return Times Theorem fail for pairs of $L^1$ functions, and they do so in quite a dramatic way.
Even the (smaller) tail operators
$$T_1(f,g)(x):=\sup_{t>1}|\frac{1}{2t}\int_{t}^{t+1} f(x+y)g(x-y)dy|$$
$$T_2(f,g)(x,y):=\sup_{n}|\frac1{n}f(\tau^nx)g(\sigma^ny)|$$
fail to be bounded for pairs of $L^1$ functions. See \cite{AB1}, and \cite{ABM}.

This project is a continuation of the work in \cite{DLTT}.  The author is indebted to M. Lacey, C. Thiele and T. Tao for helpful conversations over the last few years.

%%%%%%%%%%%%%%%%%%%%%%%%%%%
\section{Discretization}
Let $m_k:\R\to\R$ be a sequence  of multipliers. For each $1\le q\le \infty$, the maximal multiplier norm associated with them is defined as
$$\|(m_k)_{k \in \Z}\|_{M_q^{*}(\R)}
 :=\sup_{\|g\|_q=1}\left\|\sup_k|\int m_k(\theta)\widehat{g}(\theta)e^{2\pi i\theta z}d\theta|\right\|_{L^q_z(\R)}.$$

Let $K:\R\to [0,\infty)$ be a positive function with $K(0)>0$, whose Fourier transform is supported in -say- the interval $[-1,1]$. In particular, one can take $K$ to be the inverse Fourier transform of $\eta*\tilde{\eta}$, where $\eta:\R\to\R$ is supported in $[-1/2,1/2]$, $\int \eta\not=0$, and $\tilde{\eta}(\xi)=\eta(-\xi)$. Of course, Theorem \ref{t.returntimeave} will immediately follow if we can prove the same thing with $$\sup_{\| g\|_{L^q(\R)}=1}\|\sup_{k\in\Z}\frac{1}{2^{k+1}} \int_{-2^k}^{2^k}|f(x+y)g(z+y)|dy\|_{L^q_z(\R)}$$
replaced by
$$Rf(x):=\sup_{\| g\|_{L^q(\R)}=1}\|\sup_{k\in\Z}\frac{1}{2^{k}} |\int f(x+y)g(z+y)K(\frac{y}{2^k})dy|\|_{L^q_z(\R)}.$$
As remarked earlier, whenever $p\ge\frac{q}{q-1}$ we know that $R$ maps $L^p$ to $L^p$. By invoking restricted weak type interpolation, it thus suffices to prove that
\begin{equation}
\label{dssdsdfsdfdfiuiuoiuer3.r.>r[p][tpp348337rryfg}
m\{x:R1_F(x)>\lambda\}\lesssim_{p,q} \frac{|F|}{\lambda^p},
\end{equation}
for each $p<2$, $\lambda\le 1$ and each finite measure set $F\subset\R$.

We next indicate how to discretize the operator $R$. Rather than going in detail through the whole procedure, we emphasize its key aspects. The interested reader is referred to section 6 in \cite{DLTT} for details. We note however that our approach here is a simplified version of the decomposition in \cite{DLTT}, since we no longer perform Littlewood-Paley decompositions of a given average.

Let $\varphi$ be a Schwartz function such that $\widehat{\varphi}$ is supported in $[0,1]$ and such that
$$\sum_{l\in\Z}|\widehat{\varphi}(\xi-\frac{l}{2})|^2\equiv C.$$
If $C$ is chosen appropriately, it will follow that for each $F$ and each $k\in\Z$, one has the following Gabor basis expansion
$$\sum_{m,l\in\Z}\langle 1_F,\varphi_{k,m,l/2} \rangle\varphi_{k,m,l/2}=1_F.$$
Here
$$\varphi_{k,m,l}(x):=2^{-\frac{k}{2}}\varphi(2^{-k}x-m)e^{2\pi i2^{-k}xl},$$
is the $L^2$ normalized wave packet that is quasi-localized in time frequency in the rectangle $[m2^k,(m+1)2^k]\times [l2^{-k},(l+1)2^{-k}]$.

Given a scale $2^k$, one uses this expansion to get
$$R1_F(x)=\sup_{\| g\|_{L^q(\R)}=1}\|\sup_{k\in\Z}|\sum_{m,l\in\Z}\langle 1_F,\varphi_{k,m,l/2}\rangle\int\varphi_{k,m,l/2}(x+y)g(z+y)2^{-k}K(\frac{y}{2^k})dy|\|_{L^q_z(\R)}=$$
$$=\left\|\left(\sum_{m,l\in\Z}\langle 1_F,\varphi_{k,m,l/2}\rangle\F[\varphi_{k,m,l/2}(x+\cdot)2^{-k}K(\frac{\cdot}{2^k})](\theta)\right)_{k\in\Z}\right\|_{M_{q,\theta}^*(\theta)}.$$

The key observation is that the function
$$\phi_{k,m,l/2}(x,\theta)=\F[\varphi_{k,m,l/2}(x+\cdot)2^{-k}K(\frac{\cdot}{2^k})](\theta),$$
has the same decay (in $x$) as $\varphi_{k,m,l/2}$, and behaves like the function $\varphi_{k,m,l/2}(x)1_{[l2^{-k},(l+1)2^{-k}]}(\theta)$. Note that in reality, the support in $\theta$ of $\phi_{k,m,l/2}(x,\theta)$ is slightly larger than $[l2^{-k},(l+1)2^{-k}]$, more precisely it is a subset of $[l2^{-k},(l+1)2^{-k}]+[-2^{-k},2^{-k}]$. This will force upon us the use of shifted dyadic grids. But, as explained in \cite{DLTT}, for simplicity of notation (but not of the argument), we can really assume (and will do so) that we are working with the standard dyadic grid.

We will denote by $\S_{univ}$ the collection of all tiles $s=I_s\times \omega_s$ with area 1, where both $I_s$ and $\omega_s$ are dyadic intervals. We will refer to $I_s$, $\omega_s$ as the time and frequency components of $s$.

\begin{definition}
A collection $\S\subset \S_{univ}$ of tiles will be referred to as convex, if whenever $s,s''\in\S$ and $s'\in\S_{univ}$, $ \omega_{s''}\subseteq\omega_{s'}\subseteq\omega_{s}$ and  $I_s\subseteq I_{s'}\subseteq I_{s''}$ will imply that $s'\in\S$.
\end{definition}
The fact that we choose to work with convex collections of tiles is of technical nature. It will allow us to use some results -like Proposition \ref{BMOtree}- which are known to hold under the convexity assumption.

As explained in \cite{DLTT}, \eqref{dssdsdfsdfdfiuiuoiuer3.r.>r[p][tpp348337rryfg} now follows from the following theorem

\begin{theorem}
\label{ceamaiceahh}
Let $1<q<2$. Let $\S$ be some arbitrary convex finite collection of tiles.
Consider also two  collections $\{\phi_s,s\in\S\}$ and $\{\varphi_s,s\in\S\}$ of Schwartz functions. We assume the functions $\phi_s:\R^2\to\R$ satisfy
\begin{equation}
\label{xbfgr1456wd}
\supp_{\theta}(\phi_{s}(x,\theta))\subseteq \omega_{s},\;\hbox{for each\;}x
\end{equation}
\begin{equation}
\label{xbfgr1456jr}
\supp_{\xi}(\F_x(\phi_{s}(x,\theta))(\xi))\subseteq \omega_{s},\;\hbox{for each\;}\theta
\end{equation}
\begin{equation}
\label{xbfgr1456bb}
\sup_{c\in\omega_s}\left\|\frac{\partial^n}{\partial\theta^n}\frac{\partial^m}{\partial x^m}\left[\phi_{s}(x,\theta)e^{-2\pi icx}\right]\right\|_{L^{\infty}_\theta(\R)}\lesssim_{n,m,M} |I_s|^{(n-m-1/2)}\chi_{I_s}^M(x),\;\; \forall n,m,M\ge 0,
\end{equation}
uniformly in $s$.
We also assume that the functions $\varphi_s:\R\to\R$ satisfy
\begin{equation}
\label{xbfgr1456wd731q}
\supp (\widehat{\varphi_{s}})\subseteq \omega_{s}
\end{equation}
and
\begin{equation}
\label{xbfgr14561o}
\sup_{c\in\omega_s}\left|\frac{\partial^n}{\partial x^n}\left[\varphi_{s}(x)e^{-2\pi icx}\right]\right|\lesssim_{n,M} {|I_s|}^{-n-\frac12}\chi_{I_s}^M(x),\;\; \forall n,M\ge 0
\end{equation}
uniformly in $s$.

Then the following inequality holds for each measurable $F\subset\R$ with finite measure, each $0<\lambda\le 1$, and each  $1<p<2$  such that $\frac{1}{p}+\frac1q<\frac32$
\begin{equation}
\label{en.511}
m\{x:\|(\sum_{s\in \S\atop{|I_s|=2^k}}\langle 1_F,\varphi_s\rangle\phi_s(x,\theta))_{k\in\Z}\|_{M_{q,\theta}^*(\R)}>\lambda\}\lesssim \frac{|F|}{\lambda^p},
\end{equation}
The implicit constant depends only on $p,q$ and on the implicit constants in  ~\eqref{xbfgr1456bb} and  ~\eqref{xbfgr14561o} (in particular, it is independent of $\S$, $F$ and $\lambda$).
\end{theorem}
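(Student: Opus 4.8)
The plan is to establish \eqref{en.511} by a Lacey--Thiele type time--frequency decomposition: organize the tiles into trees, prove a single-tree estimate driven by the $L^q$ maximal multiplier theorem (Theorem~\ref{Blemma}), and then sum the contributions of the trees against the standard energy and density counting lemmas, checking that the resulting geometric series converges precisely in the range \eqref{restrictionpandq}. First I would reduce to \emph{good} tiles. Since $\lambda\le1$ and $p>1$, the exceptional set $\Omega=\{x:M1_F(x)>\lambda\}$, where $M$ is the Hardy--Littlewood maximal operator, satisfies $m(\Omega)\lesssim|F|/\lambda\le|F|/\lambda^{p}$, so it may be discarded; it then suffices to bound the portion of the superlevel set in \eqref{en.511} lying outside $\Omega$, and for that only the tiles $s\in\S$ whose time interval $I_s$ is not essentially contained in $\Omega$ are relevant — for those the local density of $F$ near $I_s$ is $\lesssim\lambda$.

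Next I would run the usual greedy selection algorithm to split the remaining tiles into $\bigcup_{\sigma,\delta}\S_{\sigma,\delta}$, the union being over dyadic values of the \emph{size} $\sigma\le1$ (the tree-normalized $\ell^2$ size of the coefficients $\langle1_F,\varphi_s\rangle$) and of the \emph{density} $\delta\lesssim\lambda$. Each $\S_{\sigma,\delta}$ decomposes into a family of convex trees $T$, and the two counting lemmas from \cite{DLTT} become available: the energy/Bessel estimate gives $\sum_T|I_T|\lesssim\sigma^{-2}|F|$, while the density/covering estimate gives a complementary control on $\sum_T|I_T|$ in terms of $\delta$. The convexity hypothesis is used exactly here, so that these lemmas and Proposition~\ref{BMOtree} apply.

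The heart of the matter is the single-tree estimate. Fix a tree $T$ with top frequency $\xi_T$ and time interval $I_T$, and set $m_k^{T}(x,\theta)=\sum_{s\in T,\,|I_s|=2^{k}}\langle1_F,\varphi_s\rangle\,\phi_s(x,\theta)$. Because of the $x$-localization in \eqref{xbfgr1456bb}, for each fixed $x$ and $k$ only $O(1)$ tiles contribute, so $m_k^{T}(x,\cdot)$ is, up to rapidly decaying tails, a single bump supported in $\omega_s$ of height $\lesssim|\langle1_F,\varphi_s\rangle|\,|I_s|^{-1/2}$, whose derivatives scale like powers of $|\omega_s|^{-1}=|I_s|$. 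Splitting $T$ into its lacunary and its overlapping part, on the lacunary part these bumps live in $2^{-k}$-annuli around $\xi_T$, which is exactly the configuration to which Theorem~\ref{Blemma} applies, while the overlapping part reduces to a single tile and is dominated by the Hardy--Littlewood maximal function. Feeding this into Theorem~\ref{Blemma} and integrating in $x$ over $I_T$, using that the coefficients have $\ell^2$-size $\sigma$ and density $\delta$, one obtains a single-tree bound for $\big\|\,\|(m_k^{T}(x,\cdot))_k\|_{M_{q,\theta}^{*}(\R)}\,\big\|_{L^{p}_x(I_T)}$ in terms of $|I_T|$, $\sigma$, $\delta$, $p$ and $q$. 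I expect this to be the main obstacle: the point is to convert the $L^q$ maximal multiplier bound into a genuine gain in the parameters $\sigma$ and $\delta$, and it is exactly here that $q<2$ and the inequality \eqref{restrictionpandq} enter, consistently with the observation that even a single-scale model sum is unbounded once $\frac1p+\frac1q\ge\frac32$. This is also the step where the $L^2$-based argument of \cite{DLTT} is replaced by its $L^q$ analogue, the substitute for Bourgain's theorem being precisely Theorem~\ref{Blemma}.

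Finally I would sum. Applying the single-tree estimate together with a geometric mean of the energy and density counting bounds, and using the restriction $\delta\lesssim\lambda$ forced by the exceptional set, the contribution of each $\S_{\sigma,\delta}$ to $m\{x\notin\Omega:\|(\cdots)_k\|_{M_{q,\theta}^{*}}>\lambda\}$ decays geometrically in both $\sigma$ and $\delta$, provided $p<2$ and \eqref{restrictionpandq} holds; summing the resulting double geometric series yields the bound $\lesssim|F|/\lambda^{p}$, which is \eqref{en.511}.
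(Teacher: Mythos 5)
Your outline correctly identifies several ingredients (a maximal-function exceptional set, a size/Bessel decomposition into trees as in Proposition \ref{Besselsineq}, Theorem \ref{Blemma} as the $L^q$ substitute for Bourgain's lemma, and a tree-local estimate), but the step that actually carries the theorem is missing: the passage from single-tree bounds to the full collection. You apply Theorem \ref{Blemma} tree by tree (effectively with a single frequency $\xi_T$, i.e.\ $N=1$) and then propose to sum per-tree $L^p_x(I_T)$ bounds against counting lemmas. The $M_{q,\theta}^*$ norm of the full multiplier sequence can only be split over trees by the triangle inequality, and that costs a factor comparable to the number of trees active at the point $x$: outside a set of measure $|F|/\lambda^p$ this number is of order $\sigma^{-2}\lambda^p$ for the size-$\sigma$ class, so each class contributes about $\sigma^{-1}\lambda^p$ pointwise and the sum over dyadic sizes has no uniform bound. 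The paper's proof is built precisely to avoid this linear loss: in Theorem \ref{mainineq} one fixes $x$ outside exceptional sets, collects \emph{all} top frequencies $\Xi_{x,l}$ active at $x$ (their number $N\le\beta 2^{2l}$ being controlled by the counting-function set $E^{(1)}$, after re-sorting tiles into the saturated trees $G(\T)$ and $\T_{l,m}$), and applies Theorem \ref{Blemma} \emph{once}, jointly, obtaining the sublinear factor $\beta^{1/q-1/r+\epsilon}$ instead of $\beta$; the per-tree input is not an integrated bound but the variational estimate of Proposition \ref{BMOtree}, used via Chebyshev to control the set $E^{(2)}$ where the $V^r$ norms along trees are large. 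It is exactly the gain $N^{1/q-1/r+\epsilon}$ versus $N$ that makes the sum over sizes converge in the range \eqref{restrictionpandq}; your scheme, as described, does not close even for $q=2$. (Incidentally, the claim that at fixed $(x,k)$ only $O(1)$ tiles contribute, and that the ``overlapping part reduces to a single tile'' dominated by the Hardy--Littlewood maximal function, is not accurate in this setting: here every average is a single Littlewood--Paley piece, the trees are intrinsically of overlapping type, and the relevant control is the $V^r$ bound of Proposition \ref{BMOtree}.)

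A second structural problem is the appeal to a ``density'' parameter and a density/covering lemma. Density in the Lacey--Thiele scheme is measured against a dualizing set in the target, but here the target is a weak $L^p_x$ bound of an $M_{q,\theta}^*$ norm (a supremum over $g$), which cannot be dualized pointwise; no density lemma is available, and the paper uses none. Its substitute is the single exceptional set $E=\{M1_F\ge\lambda^b\}$ with the carefully tuned exponents $Q=\frac1q-\frac12+\epsilon$ and $b=\frac{1-pQ}{1-2Q}$, the splitting of $\S$ into tiles meeting $E^c$ and tiles buried in $E$, and for the latter a further decomposition in the parameter $\kappa$ where the gain $2^{-\kappa}$ comes from spatial decay (restricting to $l\ge\kappa-1$ in the exceptional sets), not from a density count. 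The constraint $Q<1-\frac1p$, i.e.\ $\frac1p+\frac1q<\frac32$, enters exactly through this choice of exponents; your proposal invokes the range restriction but does not produce the mechanism by which it appears. So while the toolbox is the right one, the combination scheme you describe has a genuine gap at the tree-summation step and relies on a density notion that does not exist for this operator.
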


The rest of the paper is devoted to proving this theorem. We fix the collection $\S$ throughout the rest of the paper.

%%%%%%%%%%%%%%%%%%%%%%%%%%%%%%%%%%%%%%%%%%%%%%%%%%%
\section{Trees}
\label{sec:trees}

We now recall some facts about trees. We refer the reader to \cite{Thiel2}, \cite{LTBilH} and \cite{MTT1} for more details.

\begin{definition}[Tile order]
For two tiles $s$ and $s'$ we write $s\le s'$ if $I_s\subseteq I_{s'}$ and $\omega_{s'}\subseteq\omega_s$.
\end{definition}

\begin{definition}[Trees]
A \emph{tree} with \emph{top} $(I_{\T},\xi_{\T})$, where $I_{\T}$ is an arbitrary (not necessarily dyadic) interval and $\xi_{\T}\in\R$, is a convex collection of tiles $\T\subseteq\S$ such that
$I_s\subset I_{\T}$ and $\xi_{\T}\in\omega_s$ for each $s\in \T$.

We will say that the tree has top tile $T\in \T$ if $s\le T$ for each $s\in \T$.
\end{definition}

\begin{remark}
Not all trees have a top tile, but of course, each tree can be (uniquely) decomposed into a disjoint union of trees with top tiles, such that these top tiles are pairwise disjoint.

Note also each tree $\T$ with top tile $T$ can be regarded as a tree with top $(I,\xi)$, for each interval $I_T\subseteq I$ and each  $\xi\in \omega_T$. If this is the case, we will adopt the convention that $I_{\T}:=I_T$.
\end{remark}

We now recall a few definitions and results from \cite{Dem}.
We will denote by $T_m$ the Fourier quasi-projection associated with the multiplier $m$:
$$T_mf(x):=\int \widehat{f}(\xi)m(\xi)e^{2\pi i\xi x}d\xi.$$

We will use the notation
$$\tilde{\chi}_{I}(x)=(1+\frac{|x-c(I)|}{|I|})^{-1}.$$

\begin{definition}
\label{adsfdfdwqryetluidhweilh}
Let $f$ be a $L^2$ function  and let $\S'\subset\S$.
We define the size $\size(\S')$ of $\S'$ relative\footnote{The function with respect to which the size is computed will change throughout the paper; however, it will always be clear from the context} to $f$ as
$$\size(\S'):=\sup_{s\in \S'}\sup_{m_s}\frac{1}{|I_s|^{1/2}}\|\tilde{\chi}_{I_s}^{10}(x)T_{m_s}f(x)\|_{L^2_{x}},$$
where  $m_s$ ranges over all functions adapted to $10\omega_s$.
\end{definition}

Each tree defines a region in the time-frequency plane. A good heuristic for the size of the tree is to think of it as being comparable to the  $L^{\infty}$ norm of the restriction of $f$ to this region. This heuristic is made precise by means of the phase space projections. We refer the reader to \cite{MTT8} and \cite{Dem} for more details.

We recall two important results regarding the size. The first one is immediate.
\begin{proposition}
\label{g11a00uugg55}
For each $\S'\subset\S$ and each $f\in L^1(\R)$ we have
$$\size(\S')\lesssim \sup_{s\in\S'}\inf_{x\in I_s}M f(x),$$
where the size is understood with respect to $f$.
\end{proposition}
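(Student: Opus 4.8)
The plan is to reduce the claim to a pointwise bound on $T_{m_s}f$ that involves the Hardy--Littlewood maximal function evaluated at an arbitrary point of $I_s$. Fix $s\in\S'$ and a function $m_s$ adapted to $10\omega_s$. Since $|I_s||\omega_s|=1$, writing $m_s$ as a fixed-type bump dilated to $10\omega_s$ and centered at the center of $\omega_s$, and taking inverse Fourier transforms, one sees that $T_{m_s}f=f*\check m_s$ where the kernel satisfies $|\check m_s(t)|\lesssim_M |I_s|^{-1}(1+|t|/|I_s|)^{-M}$ for every $M\ge 0$, with constants uniform in $s$ and in the choice of $m_s$.

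The key step is the pointwise estimate
$$|T_{m_s}f(x)|\lesssim_M \tilde\chi_{I_s}(x)^{-M}\,\inf_{y\in I_s}Mf(y),\qquad x\in\R .$$
To prove it, fix $y\in I_s$, write $T_{m_s}f(x)=\int f(u)\check m_s(x-u)\,du$, and use the elementary inequality $1+|y-u|/|I_s|\le (1+|x-y|/|I_s|)(1+|x-u|/|I_s|)$ to move the kernel decay from being centered at $x$ to being centered at $y$, obtaining $|T_{m_s}f(x)|\lesssim_M (1+|x-y|/|I_s|)^{M}\,|I_s|^{-1}\int |f(u)|(1+|y-u|/|I_s|)^{-M}\,du$. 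Splitting $\R$ into the dyadic annuli $|u-y|\sim 2^{j}|I_s|$ and estimating each resulting average by $Mf(y)$ bounds the last integral by $Mf(y)$ as soon as $M\ge 2$. Finally, for $y\in I_s$ one has $1+|x-y|/|I_s|\lesssim 1+|x-c(I_s)|/|I_s|=\tilde\chi_{I_s}(x)^{-1}$, and taking the infimum over $y\in I_s$ gives the displayed bound.

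Inserting this into the definition of the size, and choosing $M$ with $10-M>1/2$ (say $M=2$),
$$\frac{1}{|I_s|^{1/2}}\big\|\tilde\chi_{I_s}^{10}(x)\,T_{m_s}f(x)\big\|_{L^2_x}\lesssim \frac{\inf_{y\in I_s}Mf(y)}{|I_s|^{1/2}}\big\|\tilde\chi_{I_s}^{\,10-M}\big\|_{L^2_x}\lesssim \inf_{y\in I_s}Mf(y),$$
since $\big\|\tilde\chi_{I_s}^{\,10-M}\big\|_{L^2_x}^2=|I_s|\int(1+|v|)^{-2(10-M)}\,dv\lesssim |I_s|$. Taking the supremum over all $m_s$ adapted to $10\omega_s$, and then over $s\in\S'$, yields $\size(\S')\lesssim \sup_{s\in\S'}\inf_{x\in I_s}Mf(x)$.

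The argument is routine; the only point needing care is that one cannot simply invoke the crude bound $|T_{m_s}f|\lesssim Mf$, since the right-hand side of the proposition involves $Mf$ only on $I_s$, which can be far smaller than $Mf$ near the bulk of $f$. The decaying weight $\tilde\chi_{I_s}$ in the definition of the size, however, supplies exactly the gain needed to compensate for the growth of $T_{m_s}f$ away from $I_s$, so the sharper pointwise estimate above is precisely what makes the two sides match.
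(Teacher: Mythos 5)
Your proof is correct and is exactly the standard argument that the paper omits, declaring the proposition ``immediate'': the kernel of $T_{m_s}$ is an $L^1$-normalized rapidly decaying bump at scale $|I_s|$, the submultiplicativity of $1+|\cdot|/|I_s|$ transfers the decay to any $y\in I_s$ to produce the pointwise bound $|T_{m_s}f(x)|\lesssim\tilde\chi_{I_s}(x)^{-M}\inf_{y\in I_s}Mf(y)$, and the weight $\tilde\chi_{I_s}^{10}$ with $M$ small (your choice $M=2$, needing $2(10-M)>1$) makes the $L^2$ norm come out to $O(|I_s|^{1/2})$. The only point worth making explicit is that ``adapted to $10\omega_s$'' in the definition of size must be understood in the usual time-frequency sense of controlling sufficiently many derivatives (as in the cited reference), so that the kernel decay $(1+|t|/|I_s|)^{-M}$ with $M\ge 2$ is actually available; with that reading your argument is complete.
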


The following Bessel type inequality from \cite{LTCar} will be useful in organizing  collections of tiles into trees.
\begin{proposition}
\label{Besselsineq}
Let $\S'\subseteq\S$ be a convex collection  of tiles and define $\Delta:=[-\log_2(\size(\S'))]$, where the size is understood with respect to some function $f\in L^2(\R)$. Then $\S'$ can be written as a disjoint union $\S'=\bigcup_{n\ge \Delta}\P_n,$ where $\size(\P_n)\le 2^{-n}$ and each $\P_n$ is convex and consists of a family $\F_{\P_n}$ of pairwise disjoint trees (that is, distinct trees do not share tiles) $\T$ with top tiles $T$ satisfying
\begin{equation}
\label{e:intp}
\sum_{\T\in\F_{\P_n}}|I_T|\lesssim 2^{2n}\|f\|_2^2,
\end{equation}
with bounds independent of $\S'$, $n$ and $f$.
\end{proposition}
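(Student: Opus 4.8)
The plan is to induct on the scale $n$, peeling off at each stage a convex subcollection of tiles of size $\le 2^{-n}$ whose "heavy" part can be grouped into pairwise disjoint trees with controlled total top-length. First I would set $\P_{\ge\Delta}:=\S'$ and, having constructed $\P_{\ge n}$ with $\size(\P_{\ge n})\le 2^{-n+1}$, I would extract from it the subset of tiles $s$ for which there is a function $m_s$ adapted to $10\omega_s$ with $|I_s|^{-1/2}\|\tilde\chi_{I_s}^{10}T_{m_s}f\|_2 > 2^{-n-1}$. Call this set of "$2^{-n}$-heavy" tiles $\B_n$. The complement $\P_{\ge n}\setminus\B_n$ automatically has size $\le 2^{-n}$; but one must be careful, because removing $\B_n$ may destroy convexity. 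The fix is standard in the time-frequency literature: one enlarges $\B_n$ to its convex hull inside $\P_{\ge n}$ (which is still "small" in a suitable sense because the enlargement only adds tiles sandwiched between heavy tiles), calls the result $\P_n$, and sets $\P_{\ge n+1}:=\P_{\ge n}\setminus\P_n$, which is then convex with size $\le 2^{-n}$. Running this to termination (the collection is finite, so only finitely many $n$ occur, all $\ge\Delta$) gives the decomposition $\S'=\bigcup_{n\ge\Delta}\P_n$ with $\size(\P_n)\le 2^{-n}$.

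The heart of the matter is the tree-counting estimate \eqref{e:intp}, and this is where I would invoke the orthogonality/Bessel input from \cite{LTCar}. Within each $\P_n$, one runs the usual greedy tree-selection algorithm: repeatedly pick a tile $s\in\P_n$ for which $|I_s|^{-1/2}\|\tilde\chi_{I_s}^{10}T_{m_s}f\|_2 \sim 2^{-n}$ is (nearly) maximal — say at least $2^{-n-2}$ — with extremal frequency position (largest $\omega_s$, hence a candidate top), form the tree $\T$ of all tiles of $\P_n$ below the top tile $T$ with $\xi_T:=$ the chosen frequency, remove $\T$, and repeat. Convexity of $\P_n$ guarantees each such $\T$ is convex; the selection rule makes the trees pairwise disjoint (they share no tiles by construction) and ensures each selected top tile $T$ carries a bump $m_T$ adapted to $10\omega_T$ with $\|\tilde\chi_{I_T}^{10}T_{m_T}f\|_2^2 \gtrsim 2^{-2n}|I_T|$. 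The Bessel inequality of \cite{LTCar} — which is precisely the statement that for a family of tiles that are "separated" in the way produced by the selection (so that the associated wave packets, or the functions $\tilde\chi_{I_T}^{10}T_{m_T}f$ suitably modulated, are almost orthogonal) one has $\sum_T \|\tilde\chi_{I_T}^{10}T_{m_T}f\|_2^2 \lesssim \|f\|_2^2$ — then yields
$$
2^{-2n}\sum_{\T\in\F_{\P_n}}|I_T| \;\lesssim\; \sum_{\T\in\F_{\P_n}}\|\tilde\chi_{I_T}^{10}T_{m_T}f\|_2^2 \;\lesssim\; \|f\|_2^2,
$$
which is exactly \eqref{e:intp}. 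One should double-check that the greedy algorithm, applied inside an already-convex $\P_n$, selects trees whose tops satisfy the separation hypothesis of the Bessel inequality; this is the point where the convexity assumption built into the statement is genuinely used, and where citing \cite{LTCar} correctly (for the convex setting) matters.

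I expect the main obstacle to be the bookkeeping around convexity: verifying that extracting the heavy tiles and then taking convex hulls at each stage keeps all the $\P_n$ convex with the claimed size bound, and — more delicately — that the trees produced by the greedy selection within $\P_n$ are in the precise configuration for which the orthogonality estimate of \cite{LTCar} is available. The size bound $\size(\P_n)\le 2^{-n}$ for the convex hull step is not quite automatic and needs the observation that any tile added in forming the hull lies below a heavy tile and above another, hence its own size is controlled by the surrounding ones; this is a standard but slightly fiddly lemma. Everything else — finiteness of the process, the reduction $\Delta=[-\log_2 \size(\S')]$, and the passage from the orthogonality estimate to \eqref{e:intp} — is routine once those structural points are pinned down, so I would state them carefully and otherwise refer to \cite{LTCar} and \cite{LTBilH} for the details of the selection algorithm.
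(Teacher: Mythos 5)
The paper itself offers no proof of this proposition: it is imported as a known Bessel-type result, quoted from \cite{LTCar} (in the form adapted to the single-tile size of Definition \ref{adsfdfdwqryetluidhweilh}, as used in \cite{Dem}), so there is no argument in the text to compare against line by line. Your sketch reconstructs exactly the standard argument behind that citation --- iterated size splitting, greedy selection of trees generated by the heavy tiles with a frequency-extremal choice of tops, and an almost-orthogonality/Bessel estimate for the selected tops giving $2^{-2n}\sum_T|I_T|\lesssim\|f\|_2^2$ --- and is correct in outline. Two small remarks: the convex-hull step is not needed, since if at stage $n$ one removes the \emph{complete} trees generated by the heavy tiles (a down-set in the order $\le$ intersected with the current collection), then both the removed part $\P_n$ and the remainder are automatically convex, and $\size(\P_n)\le 2^{-n}$ is automatic because size is monotone under inclusion (your factor-of-two worry is only an artifact of the index shift in your induction); and a candidate top should be a tile \emph{maximal} in the order $\le$ (largest $I_s$, hence smallest $\omega_s$), the ``extremal frequency'' in the Lacey--Thiele selection referring to the position of $\xi_T$ on the real line, which is precisely what produces the strong disjointness needed to run the Bessel estimate with a uniform constant.
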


We next recall an important decomposition from \cite{DLTT}.
Let $\T$ be a tree with top $(I_{\T},\xi_{\T})$.
For each $s\in\T$ and scale $l\ge 0$ we split  $\phi_s(x,\theta)$ as $$\phi_s(x,\theta)=\tilde{\phi}_{s,\T}^{(l)}(x,\theta)+\phi_{s,\T}^{(l)}(x,\theta).$$ For convenience, we set $\phi_{s,\T}^{(0)}:=\phi_{s}$ for each $s\in\T$. For $l\ge 1$ we define the first  piece to be localized in time:
 $$\operatorname {supp}\tilde{\phi}_{s,\T}^{(l)}(\cdot,\theta)\subseteq 2^{l-1}I_s,\;\;\hbox{for each\;}\theta\in\R.$$
For the second piece we need some degree of frequency localization, but obviously full localization as in the case of $\phi_s$ is impossible. We will content ourselves with preserving the mean zero property with respect to the top of the tree. The advantage of $\phi_{s,\T}^{(l)}$ over $\phi_s$ is that it gains extra decay in $x$. More precisely, we have for each $s\in \T$ and each $M\ge 0$

\begin{equation}
\label{e.m_s-supportati}
\phi_{s,\T}^{(l)}(x,\theta)e^{-2\pi i\xi_{\T}x}\;\hbox{ has mean zero},\;\;\theta\in\R,
\end{equation}

\begin{equation}
\label{e.m_s-size}
\phi_{s,\T}^{(l)}(x,\theta)e^{-2\pi i\xi_{\T}x}\hbox{\;is\;} c(M)2^{-Ml}-\hbox{adapted to}\; I_s,\;\;\hbox{for some constant\;}c(M), \;\;\theta\in\R,
\end{equation}

\begin{equation}
\label{e.m_s-supportgrr}
\operatorname {supp}\phi_{s,\T}^{(l)}(x,\cdot)\subset \omega_{s,2},\hbox{\;for each\;}x\in\R,
\end{equation}

 \begin{equation}
 \label{e.m_s-smoothgrr}
| \tfrac{ d}{d \theta} \phi_{s,\T}^{(l)}(x,\theta) |\lesssim 2^{-Ml} |I_s|^{\frac12} \chi_{I_s}^{M}(x),\;\;\hbox{uniformly in \;}x,\theta\in\R.
\end{equation}

We achieve this decomposition by first choosing a smooth function $\eta$ such that $\supp(\eta)\subset [-1/2,1/2]$ and $\eta=1$ on $[-1/4,1/4]$. We then define
$$\tilde{\phi}_{s,\T}^{(l)}(\theta;x):=\phi_{s}(\theta;x)\eta\text{Dil}_{2^{l}I_s}^{\infty}\eta(x)-\frac{e^{2\pi i\xi_{\T}x}\text{Dil}_{2^{l}I_s}^{\infty}\eta(x)}{\int_{\mathbb R}\text{Dil}_{2^{l}I_s}^{\infty}\eta(x)dx}\int_{\mathbb R}\phi_{s}(\theta;x)e^{-2\pi i\xi_{\T}x}\text{Dil}_{2^{l}I_s}^{\infty}\eta(x)dx$$
and
$$\phi_{s,\T}^{(l)}(\theta;x):=\frac{e^{2\pi i\xi_{\T}x}\text{Dil}_{2^{l}I_s}^{\infty}\eta(x)}{\int_{\mathbb R}\text{Dil}_{2^{l}I_s}^{\infty}\eta(x)dx}\int_{\mathbb R}\phi_{s}(\theta;x)e^{-2\pi i\xi_{\T}x}\text{Dil}_{2^{l}I_s}^{\infty}\eta(x)dx+\phi_{s}(\theta;x)(1-\text{Dil}_{2^{l}I_s}^{\infty}\eta(x)).$$
Properties ~\eqref{e.m_s-supportati} through ~\eqref{e.m_s-smoothgrr} are now easy consequences of ~\eqref{xbfgr1456wd}, ~\eqref{xbfgr1456jr} and ~\eqref{xbfgr1456bb}.

The following result is essentially Proposition 4.9 from \cite{Dem}. It can also be regarded as the "overlapping" counterpart of the "lacunary" result in Theorem 9.4 from \cite{DLTT}.
\begin{proposition}
\label{BMOtree}
 For each tree $\T$  with top $(I_{\T},\xi_{\T})$, each $l,M\ge 0$, $r>2$ and $1<t<\infty$
$$\|\|\sum_{s\in\T\atop{|I_s|=2^{k}}}\langle f, \varphi_s\rangle \phi_{s,\T}^{(l)}(x,\xi_{\T})\|_{V^r_k}\|_{L^t_x(\R)}\lesssim 2^{-Ml}\size(\T)|I_{\T}|^{1/t},$$
with the implicit constants depending only on $r$, $t$ and $M$.
\end{proposition}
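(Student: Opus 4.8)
The plan is to follow, at the level of a skeleton, the proof of Proposition 4.9 in \cite{Dem}. \emph{Modulation and reduction.} The scalar $e^{2\pi i\xi_{\T}x}$ is independent of $k$ and has modulus one, so it pulls out of the $V^r_k$ norm; setting $\psi_s(x):=\phi_{s,\T}^{(l)}(x,\xi_{\T})e^{-2\pi i\xi_{\T}x}$ and $A_k(x):=\sum_{s\in\T,\,|I_s|=2^k}\langle f,\varphi_s\rangle\,\psi_s(x)$, the claim becomes $\big\|\,\|(A_k)_{k\in\Z}\|_{V^r_k}\,\big\|_{L^t_x}\lesssim 2^{-Ml}\size(\T)|I_{\T}|^{1/t}$. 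By \eqref{e.m_s-size}, \eqref{e.m_s-smoothgrr} each $\psi_s$ is $c(M)2^{-Ml}$--adapted to $I_s\subseteq I_{\T}$, and by \eqref{xbfgr1456jr} when $l=0$, respectively by \eqref{e.m_s-supportati} when $l\ge1$ (in which case $\widehat{\psi_s}$ moreover vanishes at the origin), $\psi_s$ is, in frequency, a Littlewood--Paley bump at scale $|I_s|^{-1}$. Writing $\langle f,\varphi_s\rangle=\langle T_{m_s}f,\varphi_s\rangle$ for an $m_s$ adapted to $10\omega_s$ and using Cauchy--Schwarz with \eqref{xbfgr14561o}, one gets the coefficient bound $|\langle f,\varphi_s\rangle|\lesssim|I_s|^{1/2}\size(\T)$; more generally $\size(\T)$ is equivalent to a Carleson--type $L^2$--normalization of $f$ at all the scales occurring in $\T$.

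\emph{The $t=2$ case.} Since $r>2$, a L\'epingle / Jones--Seeger--Wright type inequality bounds $\|(A_k)_k\|_{V^r}$ pointwise by $\sup_k|A_k|$ plus a short--variation square function built from the differences $A_k-A_{k+1}$. For the first term, the adaptedness of the $\psi_s$ and the disjointness of the time intervals at each fixed scale give the \emph{pointwise} bound $\sup_k|A_k(x)|\lesssim 2^{-Ml}\size(\T)\,\tilde\chi_{I_{\T}}^{N}(x)$ for every $N$, which is $\lesssim 2^{-Ml}\size(\T)|I_{\T}|^{1/t}$ in every $L^t$. The square--function term is handled by almost--orthogonality: the $A_k$ have frequency support at scale $2^{-k}$ (and, when $l\ge1$, vanish at the origin, cf.\ \eqref{e.m_s-supportati}), so Plancherel and the coefficient bounds yield $\|(\sum_k|A_k-A_{k+1}|^2)^{1/2}\|_2\lesssim 2^{-Ml}\size(\T)|I_{\T}|^{1/2}$. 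Together these give the bound for $t=2$.

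\emph{The full range $1<t<\infty$.} To obtain every exponent with no logarithmic loss in the number of scales of $\T$, one promotes the square--function estimate to a $\BMO$ estimate: after the modulation, $f\mapsto(A_k)_k$ is, restricted to the fixed tree, a $V^r$--valued Calder\'on--Zygmund operator carrying the gain $2^{-Ml}$, its kernels $\sum_{|I_s|=2^k}\overline{\varphi_s(y)}\,\psi_s(x)$ satisfying the size and H\"ormander conditions in the $V^r_k$ norm by \eqref{xbfgr14561o}, \eqref{e.m_s-smoothgrr} and (for $l\ge1$) \eqref{e.m_s-supportati}; feeding the Carleson normalization of $f$ into the corresponding $T1$--type argument yields $\big\|x\mapsto\|(A_k(x))_k\|_{V^r}\big\|_{\BMO}\lesssim 2^{-Ml}\size(\T)$. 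This is the heart of Proposition 4.9 in \cite{Dem} (the ``overlapping'' counterpart of Theorem 9.4 in \cite{DLTT}). Granting it, set $F(x):=\|(A_k(x))_k\|_{V^r}$: by the decay of the $\psi_s$ and $\varphi_s$ off $I_{\T}$, $F$ is dominated by $2^{-Ml}\size(\T)\tilde\chi_{I_{\T}}^{N}$ outside $CI_{\T}$, while on $CI_{\T}$ it has $\BMO$ norm $\lesssim 2^{-Ml}\size(\T)$ and, by the $t=2$ bound, average $\lesssim|CI_{\T}|^{-1/2}\|F\|_2\lesssim 2^{-Ml}\size(\T)$; John--Nirenberg then gives $\|F\|_{L^t(CI_{\T})}\lesssim 2^{-Ml}\size(\T)|I_{\T}|^{1/t}$, and the tail is added trivially.

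\emph{Main obstacle.} The genuinely delicate point is the last step: upgrading the maximal and square--function control of the $r$--variation to a Calder\'on--Zygmund / $\BMO$ bound, so that the whole range $1<t<\infty$ is reached with constants independent of the depth of $\T$. This is where $r>2$ (through the L\'epingle inequality) and the mean--zero splitting $\phi_s=\tilde{\phi}_{s,\T}^{(l)}+\phi_{s,\T}^{(l)}$ are indispensable; it is precisely what Proposition 4.9 of \cite{Dem} provides, and the remainder is bookkeeping of rapidly decaying tails.
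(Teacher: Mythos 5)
The paper itself does not prove Proposition \ref{BMOtree}: after constructing the splitting $\phi_s=\tilde{\phi}_{s,\T}^{(l)}+\phi_{s,\T}^{(l)}$ and recording \eqref{e.m_s-supportati}--\eqref{e.m_s-smoothgrr}, it simply quotes Proposition 4.9 of \cite{Dem}. To that extent your final deferral of the ``heart'' of the matter to that proposition is consistent with what the author actually does, and your preliminary reductions (pulling the modulation $e^{2\pi i\xi_{\T}x}$ out of the $V^r_k$ norm, and the coefficient bound $|\langle f,\varphi_s\rangle|\lesssim|I_s|^{1/2}\size(\T)$ via $T_{m_s}$ and \eqref{xbfgr14561o}) are correct.

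However, viewed as a standalone argument, your sketch has a genuine gap exactly at the step that would make it self-contained. The inequality you invoke --- that $\|(A_k)_k\|_{V^r_k}$ is \emph{pointwise} dominated by $\sup_k|A_k|$ plus the square function of consecutive differences $(\sum_k|A_k-A_{k+1}|^2)^{1/2}$ --- is false: a sequence performing $J$ slow oscillations of amplitude $a$, each spread over $m\gg J$ consecutive indices, has $V^r$ norm of order $J^{1/r}a$, while the supremum is $a$ and the square function is of order $a\sqrt{J/m}$, both much smaller. This is not a removable technicality here: this tree is the ``overlapping'' case ($\xi_{\T}\in\omega_s$ for every $s$, and for $l=0$ there is no mean-zero property after modulation), so the sums $A_k$ behave like averages/conditional expectations at scale $2^k$, and control of their $r$-variation is precisely a L\'epingle-type statement which holds only in norm (after comparison with a martingale, in the spirit of Bourgain and Jones--Seeger--Wright) and only for $r>2$. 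The same issue affects your $V^r$-valued Calder\'on--Zygmund/$\BMO$ step: as written it asserts the hard estimate rather than proving it, and then John--Nirenberg is routine. So your proposal is a reasonable reduction to Proposition 4.9 of \cite{Dem} --- which is all the paper does --- but the intermediate claims (``$t=2$ by max plus consecutive-difference square function'' and ``$\BMO$ by a $T1$-type argument'') would have to be replaced by the genuine variational machinery to constitute a proof.
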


%%%%%%%%%%%%%%%%%%%%%%%%%%%%%%%%%%%%
\section{A result on maximal multipliers}

Consider a finite set  $\Lambda=\{\lambda_1,\ldots,\lambda_N\}\subset\R$. For each $k\in \Z$ define $R_k$ to be the collection of all dyadic intervals of length $2^{k}$ containing an element from $\Lambda$.

For each $1\le r<\infty$ and each sequence $(x_k)_{k\in \Z}\in\C$, define the \emph{$r$-variational norm} of  $(x_k)_{k \in \Z}$  to be
$$\|x_k\|_{V^r_k}:=\sup_{k}|x_k|+ \|x_k\|_{\tilde{V}^r_k}$$
where
$$ \|x_k\|_{\tilde V^r_k}:= \sup_{M,\;k_0<k_1< \ldots <k_{M}}(\sum_{m=1}^M|x_{k_m}-x_{k_{m-1}}|^r)^{1/r}.$$

For each interval $\omega\in R_k$, let $m_{\omega}$ be a complex valued Schwartz function $C$-adapted to $\omega$, that is, supported on $\omega$ and satisfying
$$\|\partial^{\alpha}m_{\omega}\|_{\infty}\le C|\omega|^{-\alpha},\;\;\alpha\in\{0,1\}.$$
Define
$$
\Delta_kf(x):=\sum_{\omega\in R_k}\int m_{\omega}(\xi)\widehat{f}(\xi)e^{2\pi i\xi x}d\xi,
$$
and also
$$\|m_{\omega}\|_{V^{r,*}}:=\max_{1\le n\le N}\|\{m_{\omega_k}(\lambda_n):\lambda_n\in \omega_k\in R_k\}\|_{V^{r}_k}.$$
We will need the following result proved in \cite{Dem}.
\begin{theorem}
\label{Blemma}
For each $1<q<2$, each $\epsilon>0$, each $r>2$ and each $f\in L^q(\R)$ we have the inequality
$$\|\sup_{k}|\Delta_kf(x)|\|_{L^q_x(\R)} \lesssim N^{1/q-1/r+\epsilon}(C+\|m_{\omega}\|_{V^{r,*}})\|f\|_q,$$
with the implicit constant depending only on $r$, $\epsilon$ and $q$.
\end{theorem}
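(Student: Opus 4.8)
The plan is to obtain the estimate by gluing together two off‑the‑shelf inputs — Bourgain's $L^2$ maximal multiplier theorem and a single‑frequency variation‑norm Littlewood--Paley inequality — by means of a Calderón--Zygmund decomposition adapted simultaneously to $\lambda_1,\dots,\lambda_N$. First, three harmless reductions. By linearity of $\Delta_k$ in the multipliers we may normalize $C+\|m_\omega\|_{V^{r,*}}=1$. Since $R_k$ consists of at most $N$ intervals, writing $\omega_k^{(n)}$ for the dyadic interval of length $2^k$ containing $\lambda_n$ we split $\Delta_k=\sum_{n=1}^{N}\Delta_k^{(n)}$ with $\Delta_k^{(n)}f:=T_{m_{\omega_k^{(n)}}}f$. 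Finally, since the target exponent is continuous in $q$ and carries an $\epsilon$ of slack, by real interpolation it suffices to prove the restricted weak‑type bound
\[
m\bigl\{x\in\R:\sup_k|\Delta_k 1_F(x)|>\lambda\bigr\}\lesssim_{q,r,\epsilon}N^{1-q/r+q\epsilon}\,\frac{|F|}{\lambda^q}
\]
for every $F\subset\R$ of finite measure and every $\lambda>0$.

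For fixed $n$ the family $(\Delta_k^{(n)})_{k\in\Z}$ is a (non‑homogeneous, frequency‑translated) Littlewood--Paley family: the $\omega_k^{(n)}$ are nested, shrink to $\{\lambda_n\}$ as $k\to-\infty$ and exhaust $\R$ as $k\to+\infty$. The variation‑norm Littlewood--Paley inequality of Jones--Seeger--Wright type, together with the hypothesis — which controls exactly the $r$‑variation in $k$ of the scalars $m_{\omega_k^{(n)}}(\lambda_n)$ — yields
\[
\bigl\|\,\|\Delta_k^{(n)}f\|_{V^r_k}\,\bigr\|_{L^t(\R)}\lesssim_{t,r}\|f\|_{L^t(\R)},\qquad 1<t<\infty,
\]
uniformly in $n$. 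Summing over $n$ gives the crude bound $\|\sup_k|\Delta_kf|\|_{L^t}\lesssim N\|f\|_{L^t}$ for every $t\in(1,\infty)$, and in particular $\|\sup_k|\Delta_kf|\|_{L^{1,\infty}}\lesssim N\|f\|_{L^1}$ (also immediate from $\sup_k|\Delta_kf|\lesssim N\,Mf$). Interpolating this weak $(1,1)$ bound with Bourgain's estimate $\|\sup_k|\Delta_kf|\|_{L^2}\lesssim N^{\epsilon}\|f\|_{L^2}$ already proves the theorem when $r\ge q'$ (here $q'=q/(q-1)$), since then the interpolated exponent $2/q-1+O(\epsilon)$ is $\le 1/q-1/r+O(\epsilon)$. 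The substantive range is therefore $2<r<q'$.

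For $2<r<q'$ I would run a multi‑frequency Calderón--Zygmund decomposition. Fix $1_F$ and $\lambda\le 1$ (for $\lambda>1$ the $L^2$ bound together with the pointwise bound $\sup_k|\Delta_k1_F|\lesssim N$ suffices). At a height $h=N^{\beta}\lambda$ to be chosen, let $\{Q\}$ be the maximal dyadic intervals with $|Q\cap F|>h|Q|$, and set $1_F=g+b$, $b=\sum_Q b_Q$, where $b_Q=(1_F-P_Q 1_F)1_Q$ with $P_Q$ the $L^2(Q)$‑orthogonal projection onto the span of $\{e^{2\pi i\lambda_n\cdot}\}_{n=1}^{N}$ on $Q$; thus $\widehat{b_Q}(\lambda_n)=0$ for every $n$. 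Then $\sum_Q|Q|\lesssim h^{-q}|F|$, $\|g\|_2^2\le|F|$ (the pieces $P_Q 1_F 1_Q$ being $L^2$‑orthogonal and dominated in $L^2$ by $1_F 1_Q$), and $\|g\|_\infty\lesssim N^{1/2}h^{1/2}$ after a routine separation of clustered frequencies. The good part is handled by Bourgain's $L^2$ theorem: $m\{\sup_k|\Delta_kg|>\lambda/2\}\lesssim\lambda^{-2}N^{2\epsilon}\|g\|_2^2\lesssim N^{2\epsilon}\lambda^{-2}|F|$. For the bad part one works outside $\bigcup_Q Q^{*}$ (total measure $\lesssim h^{-q}|F|$) and estimates $\int_{(\bigcup_Q Q^{*})^{c}}\sup_k|\Delta_k b|$: the vanishing moments of $b_Q$ together with the rapid decay of the $C$‑adapted kernels confine, for each $Q$, the scales $2^k$ that genuinely contribute to an essentially bounded range, on which $\sup_k$ is replaced by $\|\cdot\|_{V^r_k}$; one then uses the pairwise disjointness in $n$ (at each fixed scale) of the intervals $\omega_k^{(n)}$ to trade the trivial $\ell^1$‑sum over the $N$ frequencies for an $\ell^2$‑type sum, invoking the $r$‑variation bound on the remaining scales. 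Balancing $\beta$ so that the good‑part contribution, the measure of $\bigcup_Q Q^{*}$, and this bad‑part contribution all sit below $N^{1-q/r}\lambda^{-q}|F|$ completes the restricted weak‑type estimate, and the strong $L^q$ bound follows by real interpolation, the $\epsilon$ absorbing the loss.

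The main obstacle is precisely the bad‑function estimate in the range $2<r<q'$: one must push the contribution of the $N$ frequencies strictly below the trivial factor $N$, to the point that after balancing against the exceptional set and the $L^2$‑controlled good part the exponent of $N$ lands exactly on $1/q-1/r+\epsilon$. Every hypothesis enters here: $C$‑adaptedness supplies the kernel decay, the vanishing of $\widehat{b_Q}$ at all of $\lambda_1,\dots,\lambda_N$ is what confines the relevant scales, the disjointness of the $\omega_k^{(n)}$ furnishes the orthogonality gain over the $N$ frequencies, and the restriction $r>2$ together with the finiteness of $\|m_\omega\|_{V^{r,*}}$ is exactly what allows $\sup_k$ to be replaced by $\|\cdot\|_{V^r_k}$ with acceptable constants. (As $r\to 2^{+}$ the resulting exponent $N^{1/q-1/2}$ matches, up to $\epsilon$, what the multi‑frequency Calderón--Zygmund decomposition gives under $V^2$‑control of the multipliers; admitting only the weaker $V^r$‑control with $r>2$, which is all that is available in the return times application, costs exactly the loss of replacing $1/2$ by $1/r$.)
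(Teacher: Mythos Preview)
The paper does not prove this theorem; it is quoted from the companion paper \cite{Dem} and used here as a black box. So there is no in-text proof to compare against.

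That said, your strategy---Bourgain's $L^2$ maximal-multiplier lemma at the top, a trivial weak-$(1,1)$ bound at the bottom, and a multi-frequency Calder\'on--Zygmund decomposition (of Nazarov--Oberlin--Thiele type) to cover the range $2<r<q'$ where plain interpolation falls short---is indeed the architecture of the proof in \cite{Dem}. Your split into the easy regime $r\ge q'$ and the substantive regime $r<q'$ is correct, and you have located the real difficulty in the bad-part estimate, where the vanishing of $\widehat{b_Q}$ at every $\lambda_n$, the disjointness of the $\omega_k^{(n)}$ at each fixed scale, and the $V^r$ control across scales must together beat the trivial factor $N$ down to $N^{1/q-1/r+\epsilon}$.

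Two cautions. First, the decomposition $\Delta_k=\sum_{n=1}^N\Delta_k^{(n)}$ over-counts whenever several $\lambda_n$ fall in the same dyadic interval at scale $k$; the sum should be over $\omega\in R_k$, not over $n$. This is cosmetic but should be fixed. Second, the $L^2$ input you call ``Bourgain's estimate $N^\epsilon$'' is not literally Bourgain's 1989 lemma, which concerns a \emph{fixed} bump dilated across scales; here the $m_\omega$ are merely $C$-adapted and related across $k$ only through the $V^r$ hypothesis. Securing the required $L^2$ bound in this generality is itself part of what \cite{Dem} establishes, and your sketch assumes it rather than proves it. Beyond these points your outline is sound, but it remains a strategy: the arithmetic that pins the exponent of $N$ to exactly $1/q-1/r$ after balancing the good part, the exceptional set, and the bad part is the substance of the argument and is not carried out here.
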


%%%%%%%%%%%%%%%%%%%%%%%%%%%%%%%%%%%%%%%%
\section{Pointwise estimates outside  exceptional sets}
\label{sec:PointEst}

Let  $\P$ be a finite convex collection of tiles which can be written as a  disjoint union of trees $\T$ with tops $T$
$$\P=\bigcup_{\T\in\F}\T.$$
To quantify better the contribution  coming from individual tiles, we need to reorganize the collection $\F$ in a more suitable way. For each  $\T\in\F$ define its saturation
$$G(\T):=\{s\in \P:\omega_T\subseteq\omega_s\}.$$
For the purpose of organizing $G(\T)$ as a collection of  disjoint and better spatially localized trees we  define for each $l\ge 0$ and $m\in\Z$ the tree
$\T_{l,m}$ to include all tiles $s\in G(\T)$ satisfying  the following requirement:
\begin{itemize}
\item $I_s\cap 2^lI_T\not=\emptyset$, if $m=0$
\item $I_s\cap (2^lI_T+2^lm|I_T|)\not=\emptyset$ and $I_s\cap (2^lI_T+2^l(m-1)|I_T|)=\emptyset$, if $m\ge 1$
\item $I_s\cap (2^lI_T+2^lm|I_T|)\not=\emptyset$ and $I_s\cap (2^lI_T+2^l(m+1)|I_T|)=\emptyset$, if $m\le -1$
\end{itemize}
We remark that since $|I_s|\le |I_T|$ for each $s\in G(\T)$, for a fixed $l\ge 0$, each $I_s$ can intersect at most two intervals $2^lI_T+2^lm|I_T|$ (and they must be adjacent).
Obviously, for each $l\ge 0$ the collection consisting of $(\T_{l,m})_{m\in\Z}$ forms a partition of $G(\T)$ into trees. The top  of $\T_{l,m}$ is formally assigned to be the pair $(I_{\T_{l,m}},\xi_{\T})$, where $I_{\T_{l,m}}$ is  the interval $(2^l+2)I_T+2^lm|I_T|$ while $\xi_{\T}$ is the frequency component of the top $(I_{\T},\xi_{\T})$ of the tree $\T$.

Denote by $\F_{l,m}$ the collection of all the trees $\T_{l,m}$.
Consider $\sigma,\gamma>0$, $\beta\ge 1$,  $r>2$ and the complex numbers $a_s,s\in\P.$

\begin{theorem}
\label{mainineq}
Let $1<q<2$. Assume we are in the settings from above and also that the following additional requirement is satisfied
\begin{equation}
\label{BMNCARseq}
\sup_{s\in\P}\frac{|a_s|}{|I_s|^{1/2}}\le \sigma.
\end{equation}
Define the  exceptional sets
\begin{align*}
E^{(1)}&:=\bigcup_{l\ge 0}\{x:\sum_{\T\in\F}1_{2^lI_T}(x)>\beta2^{2l}\},\\
E^{(2)}&:=\bigcup_{l,m\ge 0}\bigcup_{\T\in\F_{l,m}^{}}\{x:\|\sum_{s\in \T\atop{|I_s|=2^j}}a_s\phi_{s,\T}^{(\alpha(l,m))}(x,\xi_{\T})\|_{V^r_j(\Z	 )}>\gamma2^{-10l}(|m|+1)^{-2}\},
\end{align*}
where the  $\alpha(l,m)$ equals $l$ if $m\in \{-1,0,1\}$ and $l+[\log_2|m|]$ otherwise.

Then, for each $0<\epsilon<1$ (say), and for each $x\notin E^{(1)}\cup E^{(2)}$ we have
\begin{equation}\label{forex}
\|(\sum_{s\in \P\atop{|I_s|=2^k}}a_s\phi_s(x,\theta))_{k\in\Z}\|_{M_{q,\theta}^*(\R)}\lesssim \beta^{1/q-1/r+\epsilon}(\gamma+\sigma),
\end{equation}
with the implicit constants depending only on $r,\epsilon$ and $q$.
\end{theorem}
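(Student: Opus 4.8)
The plan is to estimate the maximal multiplier norm on the left of \eqref{forex} by freezing the frequency variable $\theta$ at the finitely many frequencies that actually matter, and then invoking Theorem \ref{Blemma}. The starting point is the decomposition of $\P$ into the trees $\T\in\F$ and, for a fixed $x$, the organization of each saturation $G(\T)$ into the spatially localized trees $\T_{l,m}$. For a given $\theta$, the term $\sum_{|I_s|=2^k}a_s\phi_s(x,\theta)$ only receives contributions from those $s$ with $\theta\in\omega_s$; grouping these $s$ by the tree $\T$ whose top frequency interval $\omega_T$ is the minimal one containing $\theta$ among the tops, one sees that $s$ then lies in $G(\T)$, and hence in exactly one $\T_{l,m}$ for each scale parameter $l$. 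The idea is to write, for each scale $k$,
$$\sum_{s\in\P,\ |I_s|=2^k}a_s\phi_s(x,\theta)=\sum_{\T\in\F}\sum_{m\in\Z}\sum_{s\in\T_{l,m},\ |I_s|=2^k}a_s\phi_s(x,\theta),$$
and then replace $\phi_s$ by the decomposition $\phi_s=\tilde\phi_{s,\T}^{(\alpha(l,m))}+\phi_{s,\T}^{(\alpha(l,m))}$ from Section 3. The first, spatially truncated, piece $\tilde\phi$ is supported in $2^{\alpha-1}I_s\subseteq$ (roughly) $I_{\T_{l,m}}$, so for $x$ outside $E^{(1)}$ only $O(\beta 2^{2l})$ such trees are "active" at $x$ at each $l$; the second piece $\phi_{s,\T}^{(\alpha)}$ is controlled on $E^{(2)}$'s complement by $\gamma 2^{-10l}(|m|+1)^{-2}$ in the $V^r_k$ norm, which dominates the $\sup_k$ and thus the whole maximal-multiplier contribution of that single tree. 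Summing the geometric-type series in $l$ and $m$ (the $(|m|+1)^{-2}$ and $2^{-10l}$ factors are there precisely to make this converge) will handle the $\phi_{s,\T}^{(\alpha)}$ part and contribute the $\gamma$ term, after accounting for the $\beta 2^{2l}$ trees with the $2^{-10l}$ decay.

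The heart of the matter is the $\tilde\phi$ pieces, where the frequency localization of $\phi_s$ in $\omega_s$ is exactly intact (it is only $\phi_s$ itself, cut off in space). Here I would fix $x$ and observe that the operator $g\mapsto \int m_k(\theta)\hat g(\theta)e^{2\pi i\theta z}d\theta$ with $m_k(\theta)=\sum_{|I_s|=2^k}a_s\tilde\phi_{s,\T}^{(\alpha)}(x,\theta)$ is, for each fixed $k$, a sum over the $O(\beta 2^{2l})$ active trees of Fourier projections onto the disjoint frequency intervals $\omega_T$. Within a single active tree $\T$ at scale $k$ there is at most one tile $s$ with $|I_s|=2^k$ (tiles of a tree at a fixed scale are forced to be unique once the frequency is pinned), so $m_k$ restricted to $\omega_T$ is $a_s\tilde\phi_{s,\T}^{(\alpha)}(x,\cdot)$, a single adapted bump on $\omega_T$ of height $\lesssim \sigma$ by \eqref{BMNCARseq} and \eqref{xbfgr1456bb}. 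This is precisely the setup of Theorem \ref{Blemma} with $N\approx\beta 2^{2l}$: the frequencies $\lambda_n$ are representative points of the $\omega_T$'s, the adapted multipliers are the $\tilde\phi_{s,\T}^{(\alpha)}(x,\cdot)$, and the $V^{r,*}$ norm of the family is controlled by the $V^r_k$ bound available off $E^{(2)}$ together with the trivial $\sup$ bound $\lesssim\sigma$. Theorem \ref{Blemma} then yields an $L^q_z$ bound of order $N^{1/q-1/r+\epsilon}(\sigma+\gamma)\lesssim (\beta 2^{2l})^{1/q-1/r+\epsilon}(\sigma+\gamma)$ for the contribution at level $l$.

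The final step is to sum over $l\ge 0$: the gain $2^{-Ml}$ in \eqref{e.m_s-size}/\eqref{e.m_s-smoothgrr} (choosing $M$ large, say $M>10(1/q-1/r+1)$) beats the loss $2^{2l(1/q-1/r+\epsilon)}$ from the number of active trees, and similarly the $(|m|+1)^{-2}$ factor (and the extra $[\log_2|m|]$ shift in $\alpha(l,m)$) makes the $m$-sum converge. Collecting everything gives the bound $\lesssim \beta^{1/q-1/r+\epsilon}(\sigma+\gamma)$. The main obstacle I anticipate is the bookkeeping at the interface between the geometric decompositions: one must check carefully that for $x\notin E^{(1)}$ the number of trees contributing spatially at level $l$ is genuinely $O(\beta 2^{2l})$ \emph{uniformly in $k$} (so that a single application of Theorem \ref{Blemma} with $N=O(\beta 2^{2l})$ suffices rather than an $N$ growing with the number of scales), and that the truncated pieces $\tilde\phi_{s,\T}^{(\alpha)}(x,\cdot)$ genuinely inherit the adaptedness constants of $\phi_s$ with no $l$-dependent loss — the $l$-decay must be quarantined entirely in the $\phi_{s,\T}^{(\alpha)}$ pieces. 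Handling the shifted-dyadic-grid issue flagged after \eqref{xbfgr1456jr} (the true $\theta$-support is slightly larger than $\omega_s$) is a routine but necessary technical point, absorbed by working with a bounded number of shifted grids.
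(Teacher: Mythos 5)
Your proposal gathers the right ingredients (the trees $\T_{l,m}$, the splitting $\phi_s=\tilde{\phi}_{s,\T}^{(\alpha)}+\phi_{s,\T}^{(\alpha)}$, Theorem \ref{Blemma} with $N\approx\beta 2^{2l}$ via $E^{(1)}$, and the variational bound encoded in $E^{(2)}$), but it assembles them with the roles of the two pieces reversed, and the argument does not close. The idea you are missing is the pointwise, $x$-dependent partition of the tile set: for the fixed $x\notin E^{(1)}\cup E^{(2)}$ one writes $\P=\bigcup_{l\ge 0}\P_{l,x}$ according to whether $x\in 2^lI_T\setminus 2^{l-1}I_T$ (with an inductive removal so that each tile is counted for exactly one $l$), and one observes that every $s\in\P_{l,x}$ with $l\ge 1$ satisfies $x\notin 2^{l-1}I_s$, indeed $x\notin 2^{\alpha(l,m)-1}I_s$ for $s\in\T_{l,m}\cap\P_{l,x}$. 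This has two consequences the proof cannot do without. First, $\tilde{\phi}_{s,\T}^{(\alpha(l,m))}(x,\cdot)\equiv 0$ for all relevant tiles, i.e. $\phi_s(x,\cdot)=\phi_{s,\T}^{(\alpha(l,m))}(x,\cdot)$ at this particular $x$; so there is no ``$\tilde{\phi}$ part'' to estimate at all, and, crucially, the bound defining $E^{(2)}$ (a statement about $\phi_{s,\T}^{(\alpha)}$ at the single frequency $\xi_{\T}$) becomes exactly the $V^{r,*}$ input for the \emph{true} multipliers $m_\omega(\theta)=\sum_{s\in\P_{l,x}:\,\omega_s=\omega}a_s\phi_s(x,\theta)$ fed into Theorem \ref{Blemma}. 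Second, by \eqref{xbfgr1456bb} these $m_\omega$ are $O(2^{-10l}\sigma)$-adapted, and it is this gain, not the $2^{-Ml}$ of \eqref{e.m_s-size}, that beats the factor $(\beta 2^{2l})^{1/q-1/r+\epsilon}$ when summing over $l$. Note also that in your scheme the same tile would be split with parameter $\alpha(l,m)$ for every $l$, so it is not even clear what the $l$-sum decomposes; in the paper $l$ indexes a partition of the tiles, not of the kernels.

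Concretely, each half of your plan lacks what the other half has. For the $\phi_{s,\T}^{(\alpha)}$ pieces, the claim that the $V^r_j$ bound at the single frequency $\xi_{\T}$ ``dominates the whole maximal-multiplier contribution of that single tree'' is unjustified: passing from variation at one frequency to the $M^*_{q}$ norm is precisely what Theorem \ref{Blemma} does, and it also needs the adaptedness constant in $\theta$; moreover these pieces are not spatially supported near $I_T$, so the count of $O(\beta 2^{2l})$ ``active'' trees does not apply to them, and a sum over all of $\F$ with the per-tree bound $\gamma 2^{-10l}(|m|+1)^{-2}$ has no smallness in the number of trees. For the $\tilde{\phi}$ pieces, the $V^{r,*}$ control you need for Theorem \ref{Blemma} is not supplied by $E^{(2)}$ (which concerns the other piece) and is false in general: at a point $x$ inside the spatial supports, the variation in $k$ of $\sum_{|I_s|=2^k}a_s\tilde{\phi}_{s,\T}^{(\alpha)}(x,\xi_{\T})$ can be as large as $(\#\{\text{scales}\})^{1/r}\sigma$; and even granting the bound $\sigma$, with $N\approx\beta 2^{2l}$ there is no decay in $l$ on this half (the $2^{-Ml}$ gain lives entirely in the other piece, as you yourself note), so the $l$-sum diverges. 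Two smaller slips: a tree has, at a fixed scale, a unique frequency interval but many tiles with disjoint $I_s$, so $m_\omega$ is a row of bumps rather than a single one (harmless, since $\sum_s\chi_{I_s}^M(x)\lesssim 1$); and assigning a tile to the tree with minimal $\omega_T\ni\theta$ need not give $\omega_T\subseteq\omega_s$, so that tile need not lie in that tree's saturation. The paper's pointwise partition and the vanishing of the truncated piece at $x$ repair all of these points simultaneously.
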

\begin{proof}
For each $l\ge 0$  and each $x\in \R$ define inductively
\begin{align*}
\F_{0,x} &:=\{\T\in\F,x\in I_T\} \\
\F_{l,x} &:=\{\T\in\F,x\in 2^lI_T\setminus 2^{l-1}I_T\},\;\;l\ge 1\\
\P_{0,x} &:=\bigcup_{\T\in \F_{0,x}}G(\T)\\
\P_{l,x}&:=\bigcup_{\T\in \F_{l,x}}G(\T)\setminus\bigcup_{l'<l}\P_{l',x},\;\;l\ge 1\\
\tilde{\F}_{l,x} &:=\{\T\in\F_{l,x}:G(\T)\setminus\bigcup_{l'<l}\P_{l',x}\not=\emptyset\}\\
\Xi_{x,l}&:=\{c(\omega_T):\;\T\in \tilde{\F}_{l,x}\}.
\end{align*}
Note that for each $x\in\R$, $\{\P_{l,x}\}_{l\ge 0}$ forms a partition of $\P$. Since $x\notin E^{(1)}$, it also follows that $\sharp \Xi_{x,l}\le \beta 2^{2l}$.

Fix  $x\not\in E^{(1)}\cup E^{(2)}\cup E^{(3)}$. Next, we fix $l$ and try to estimate
$$\|(\sum_{s\in \P_{l,x}\atop{|I_s|=2^k}}a_s\phi_s(x,\theta))_{k\in\Z}\|_{M_{q,\theta}^*(\R)}.$$

Note that for each $\lambda\in \Xi_{x,l}$,
\begin{equation}
\label{ncbdbcvhgfu90q3493-2=045934756745yjfnvfmghjrtkgh}
\{s\in \P_{l,x}: \lambda\in\omega_s\}= G(\T')\cap\P_{l,x},
\end{equation}
for some $\T'\in \tilde{\F}_{l,x}$ (and -perhaps surprisingly- if $\lambda=c(\omega_T)$, $\T'$ is not necessarily the tree  whose top tile is $T$). Indeed, let $\omega$ be the shortest frequency component of a tile $s$ from $\P_{l,x}$ such that $\lambda\in\omega$. In other words, $\omega=\omega_s$. This tile belongs to $G(\T')$, for some $\T'\in \tilde{\F}_{l,x}$ (if there are more such $\T'$, select any of them). Note that its top tile $T'$ must be in $\P_{l,x}$ (otherwise, it must be that $T'\in \bigcup_{l'<l}\P_{l',x}$, hence $T'$ was eliminated earlier, and thus the whole $G(\T)$ must have been eliminated at the same stage). \eqref{ncbdbcvhgfu90q3493-2=045934756745yjfnvfmghjrtkgh} is now immediate.

For each $\T\in \tilde{\F}_{l,x}$ define
$$\T_{l,m,x}:=\T_{l,m}\cap \P_{l,x},$$
and note that $(\T_{l,m,x})_{m}$ partition $G(\T)\cap \P_{l,x}$. An important observation is that for each $k$, the set
$\{s\in \T_{l,m,x}:|I_s|=2^k\}$ either equals $\{s\in \T_{l,m}:|I_s|=2^k\}$, or else it is empty. As a consequence,
\begin{equation}
\label{nbcdhgveyf7824t3489t0o4gmrkbmngkbjhuyq7ry}
\|\sum_{s\in\T_{l,m,x}\atop{|I_s|=2^{k}}}a_s \phi_{s,\T}^{(\alpha(l,m))}(x,\xi_{\T})\|_{V^r_k}\le \|\sum_{s\in\T_{l,m}\atop{|I_s|=2^{k}}}a_s \phi_{s,\T}^{(\alpha(l,m))}(x,\xi_{\T})\|_{V^r_k}.
\end{equation}
For each dyadic $\omega$ denote by
$$m_\omega(\theta):=\sum_{s\in \P_{l,x}:\omega_s=\omega}a_s \phi_{s}(x,\theta).$$
The key observation is that if $s\in \P_{l,x}$ and $l\ge 1$, then $x\notin 2^{l-1}I_s$, as can be easily checked. This together with
property \eqref{xbfgr1456bb} easily implies that  $m_\omega$ is $O(2^{-10l}\sigma)-$ adapted to $\omega$. Theorem \ref{Blemma} applied to $\Lambda:=\Xi_{l,x}$, and \eqref{ncbdbcvhgfu90q3493-2=045934756745yjfnvfmghjrtkgh} imply that
$$\|(\sum_{s\in \P_{l,x}\atop{|I_s|=2^k}}a_s\phi_s(x,\theta))_{k\in\Z}\|_{M_{q,\theta}^*(\R)}\lesssim 2^{4l}\beta^{1/q-1/r+\epsilon}(2^{-10l}\sigma+\max_{\T\in \tilde{\F}_{l,x} }\|\sum_{s\in\G(\T)\cap \P_{l,x}\atop{|I_s|=2^{k}}}a_s \phi_{s}(x,\xi_{\T})\|_{V^r_k}).$$

It remains to show that for each $\T\in \tilde{\F}_{l,x}$, $\|\sum_{s\in\G(\T)\cap \P_{l,x}\atop{|I_s|=2^{k}}}a_s \phi_{s}(x,\xi_{\T})\|_{V^r_k}\lesssim 2^{-10l}\gamma$. Another key observation is that if $s\in \T_{l,m,x}$ and $l\ge 1$, then $x\notin 2^{\alpha(l,m)-1}I_s$. It follows that for each $l\ge 0$ and each $m\in\Z$,
$$\phi_s(x,\theta)=\phi_{s,\T}^{(\alpha(l,m))}(x,\theta).$$
Using this and \eqref{nbcdhgveyf7824t3489t0o4gmrkbmngkbjhuyq7ry} we get that
\begin{align*}
\|\sum_{s\in\G(\T)\cap \P_{l,x}\atop{|I_s|=2^{k}}}a_s \phi_{s}(x,\xi_{\T})\|_{V^r_k}&\le \sum_{m}\|\sum_{s\in \T_{l,m,x}\atop{|I_s|=2^{k}}}a_s \phi_{s}(x,\xi_{\T})\|_{V^r_k}\\&= \sum_{m}\|\sum_{s\in \T_{l,m,x}\atop{|I_s|=2^{k}}}a_s \phi_{s,\T}^{(\alpha(m,l))}(x,\xi_{\T})\|_{V^r_k}\\&\le \sum_{m}\|\sum_{s\in \T_{l,m}\atop{|I_s|=2^{k}}}a_s \phi_{s,\T}^{(\alpha(l,m))}(x,\xi_{\T})\|_{V^r_k}
\end{align*}
Finally, since $x\notin E^{(2)}$, the last sum is $O(2^{-10l}\gamma)$, as desired. Now, \eqref{forex} follows from the triangle inequality.
\end{proof}

%%%%%%%%%%%%%%%%%%%%%%%%%%%%%%%%%%%%%%%%%%%%%%%
%%%%%%%%%%%%%%%%%%%%%%%%%%%%%%%%%%%%%%%%%%%%%%%%
\section{Proof of Theorem \ref{ceamaiceahh}}
\label{section:last}

For each collection of tiles $\S'\subseteq \S$ define the following operator
\begin{align*}
V_{\S'}f(x)&:=\|(\sum_{s\in \S'\atop{|I_s|=2^k}}\langle f,\varphi_s\rangle\phi_s(x,\theta))_{k\in\Z}\|_{M_{q,\theta}^*(\R)}.
\end{align*}

Note that for each $\S'$ the operator $V_{\S'}$ is sublinear as a function of $f$. Also, for each $f$ and $x$ the mapping  $\S'\to V_{\S'}f(x)$ is sublinear as a function of the tile set $\S'$. Let $1<p<2$ be fixed such that $\frac1p+\frac1q<\frac32$. We will prove in the following that for each $\delta>0$ and each $0<\lambda<1$
\begin{equation}
\label{en.6}
m\{x:V_{\S}1_{F}(x)\gtrsim \lambda^{1-\delta}\}\lesssim_{\delta,p,q} \frac{|F|}{\lambda^p}.
\end{equation}
Since the range of $p$ is open, this will immediately imply Theorem \ref{ceamaiceahh}. Fix now $\delta>0$. Let $\epsilon>0$ be sufficiently small, depending on $\delta$. Its value will not be specified, but it will be clear from the argument below that such an $\epsilon$ exists. Define $Q=\frac1q-\frac12+\epsilon$. We can arrange that $Q<1-\frac1p$, and define $b:=\frac{1-pQ}{1-2Q}$. It will follow that
\begin{equation}
\label{Eq:MN1}
0<b<p
\end{equation}
We can also arrange that
\begin{equation}
\label{Eq:MN2}
\epsilon+(2+\epsilon)Q<1
\end{equation}

Define the  first exceptional set
$$E:=\{x:M1_{F}(x)\ge \lambda^b\}.$$
Note that
\begin{equation}
\label{dnvjqhrfo5t8945upghi46y90856y0}
|E|\lesssim \frac{|F|}{\lambda^p}.
\end{equation}
Split $\S=\S_1\cup\S_2$ where
\begin{align*}
\S_1&:=\{s\in\S:I_s\cap E^{c}\not=\emptyset\}\\
\S_2&:=\{s\in\S:I_s\cap E^{c}=\emptyset\}.
\end{align*}

We first argue that
\begin{equation}
\label{en.11}
m\{x\in \R: V_{\S_1}1_F(x)\gtrsim \lambda^{1-\delta}\}\lesssim \frac{|F|}{\lambda^p}.
\end{equation}

Proposition ~\ref{g11a00uugg55} guarantees that $\size(\S_1)\lesssim \lambda^b$, where the size is understood here with respect to the  function $1_F$. Define $\Delta:=[-\log_2(\size(\S_1))].$ Use the result of Proposition ~\ref{Besselsineq} to split $\S_1$ as a disjoint union $\S_1=\bigcup_{n\ge \Delta}\P_n,$ where $\size(\P_n)\le 2^{-n}$ and each $\P_n$ consists of a family $\F_{\P_n}$ of trees  satisfying
\begin{equation}
\label{e:intp1}
\sum_{\T\in\F_{\P_n}}|I_T|\lesssim 2^{2n}|F|.
\end{equation}

For each $n\ge \Delta$ define $\sigma=\sigma_n:=2^{-n}$, $\beta=\beta_n:=2^{(2+\epsilon)n}\lambda^{p}$, $\gamma=\gamma_n:=2^{-n[(2+\epsilon)Q+\epsilon]}\lambda^{1-Qp-3\epsilon}$. Define $a_s:=\langle 1_F,\varphi_s\rangle$ for each $s\in\P_n$ and note that the collection $\P_n$ together with the coefficients $(a_s)_{s\in\P_n}$ satisfy the requirements of Theorem ~\ref{mainineq}. Let $\F_{\P_n,l,m}$ be the collection of all the trees $\T_{l,m}$ obtained from all the trees $\T\in\F_{\P_n}$ by the procedure described in the beginning of the previous section. Let $r>2$ be any number such that $\frac1q-\frac1r<Q$. Define the corresponding exceptional sets
\begin{align*}
E^{(1)}_{n}&:=\bigcup_{l\ge 0}\{x:\sum_{\T\in\F_{\P_n}}1_{2^lI_T}(x)>\beta_n2^{2l}\},\\
E^{(2)}_{n}&:=\bigcup_{l,m\ge 0}\bigcup_{\T\in\F_{\P_n,l,m}}\{x:\|\sum_{s\in \T\atop{|I_s|<2^j}}a_s\phi_{s,\T}^{(\alpha(l,m))}(x,\xi_{\T})\|_{V^r_j(\Z)}>\gamma_n2^{-10l}(|m|+1)^{-2}\}.
\end{align*}
By  ~\eqref{e:intp1}  we get
$$|E^{(1)}_{n}|\lesssim 2^{-n\epsilon}\lambda^{-p}|F|.$$
By Theorem ~\ref{BMOtree}, for each $1<s<\infty$ we get
$$|E^{(2)}_{n}|\lesssim \gamma_n^{-s}\sigma_n^{s-2}|F|\lesssim 2^{-n[-2-s(-1+(2+\epsilon)Q+\epsilon)]}\lambda^{-s(1-Qp-3\epsilon)}|F|. $$
Define
$$E^{*}:=\bigcup_{n\ge \Delta}(E^{(1)}_{n}\cup E^{(2)}_{n}).$$ Trivial computations show that since $\lambda\le 1$ and since $2^{-\Delta}\lesssim \lambda^b$, we have $|E^{*}|\lesssim \lambda^{-p}|F|,$ (work with  a sufficiently large $s$, depending only on $p,q,Q,\epsilon$).

For each $x\notin E^{*}$, Theorem ~\ref{mainineq} guarantees that
$$
 V_{\S_1}1_F(x)\le \sum_{n\ge \Delta}V_{\P_n}1_F(x)\lesssim \sum_{n\ge \Delta}\beta_n^Q(\gamma_n+\sigma_n).
$$
The latter sum is easily seen to be $O(\lambda^{1-\delta})$, if $\epsilon$ is sufficiently small.
This ends the proof of ~\eqref{en.11}. We next prove that (and note that this is enough, due to \eqref{dnvjqhrfo5t8945upghi46y90856y0})
\begin{equation}
\label{en.11hsdgjhgsdytweyehcjxzkJSkcsp'aPosQIS}
m\{x\notin E: V_{\S_2}1_F(x)\gtrsim \lambda^{1-\delta}\}\lesssim \frac{|F|}{\lambda^p}.
\end{equation}

To achieve this, we split
$$\S_2:=\bigcup_{\kappa>0}\S_{2,\kappa},$$
where
$$\S_{2,\kappa}:=\{s\in\S_2:2^{\kappa-1}I_s\cap E^{c}=\emptyset,\;2^{\kappa}I_s\cap E^{c}\not=\emptyset\},$$
and we prove that, uniformly over $\kappa>0$,
\begin{equation}
\label{bcvnsdbvghrgfyqye903892-r4i34opfregnrtkbmgl;hnkety0960-oy76ijoykj}
m\{x\notin E: V_{\S_{2,\kappa}}1_F(x)\gtrsim 2^{-\kappa}\lambda^{1-\delta}\}\lesssim 2^{-\kappa}\frac{|F|}{\lambda^p}.
\end{equation}
Note further that if $s\in\S_{2,\kappa}$ then
$$
\frac{|\langle 1_F,\varphi_s\rangle|}{|I_s|^{1/2}}\lesssim \inf_{x\in I_s}\M1_F(x)\lesssim 2^{\kappa}\inf_{x\in 2^{\kappa}I_s}\M1_F(x)\lesssim\lambda^b2^{\kappa},
$$
and thus $\S_{2,\kappa}$ has size $O(\lambda^b2^{\kappa})$. Note also that $\S_{2,\kappa}$ remains convex. The proof of \eqref{bcvnsdbvghrgfyqye903892-r4i34opfregnrtkbmgl;hnkety0960-oy76ijoykj} now follows exactly the same way as the proof of ~\eqref{en.11}. The fact that the size of $\S_{2,\kappa}$ is (potentially) greater than that of $\S_1$ by a factor of $2^{\kappa}$ is compensated by the fact that for each $x\notin E$ and each $s\in \S_{2,\kappa}$, $x\notin 2^{\kappa-1}I_s$. It follows that in the definition of the exceptional sets $E_n^{(1)}$ and $E_n^{(2)}$ for this case, we can can restrict the union to $l\ge \kappa-1$. We leave the details to the interested reader.

%%%%%%%%%%%%%%%%%%%%%%%%%%%%%%

\end{document}